%
%
%
%

\documentclass[a4paper,11pt,oneside]{article}

%
%
%
%
%
%

%
\usepackage{amsmath,amsthm,amsfonts,amssymb,amscd}
%
\usepackage{mathtools}
\usepackage{pgf,tikz}
\usepackage{mathrsfs}
\usepackage{float}
\usepackage{csvsimple}
\usepackage{enumerate}
\usepackage{booktabs}
\usepackage{pgfplots}
\usepackage[noend]{algpseudocode}
\usepackage{graphicx,bm,xcolor}
\usepackage{algorithm}
\usepackage{pdfpages}
\usepackage{algpseudocode}
\usepackage{stmaryrd}
\usepackage[square,sort,comma,numbers]{natbib}
\usetikzlibrary{arrows}
\usepackage{caption}

\usepackage{t1enc}
\usepackage[german,english]{babel}
\usepackage{verbatim} 

\usepackage{url}

\newif\ifMAKEPICS
\MAKEPICSfalse

\ifMAKEPICS
\usepackage[cleanup,subfolder]{gnuplottex}
\usepackage{xparse}

\ExplSyntaxOn
\DeclareExpandableDocumentCommand{\convertlen}{ O{cm} m }
{
	\dim_to_decimal_in_unit:nn { #2 } { 1 #1 } cm
}
\ExplSyntaxOff
\fi


\usepackage{authblk}

\usepackage[top=2cm, bottom=2cm, left=2cm, right=2cm]{geometry}



\theoremstyle{plain} 
\newtheorem{theorem}{Theorem}[section]
\newtheorem{propositon}{Proposition}[section]

\newtheorem{remark}[theorem]{Remark}
\newtheorem{definition}[theorem]{Definition}

\theoremstyle{definition} %

\theoremstyle{remark} %

%




\newcommand{\ignore}[1]{}

\begin{document}

\title{Multigoal-Oriented Error Estimates for Non-linear Problems} 
	\author[1,2]{B. Endtmayer}
	\author[2]{U. Langer}
	\author[3]{T. Wick}

	\affil[1]{Doctoral Program on Computational Mathematics, Johannes Kepler University, Altenbergerstr. 69, A-4040 Linz, Austria}
	\affil[2]{Johann Radon Institute for Computational and Applied Mathematics, Austrian Academy of Sciences, Altenbergerstr. 69, A-4040 Linz, Austria}
	\affil[3]{Institut f\"ur Angewandte Mathematik, Leibniz Universit\"at Hannover, Welfengarten 1, 30167 Hannover, Germany}

	\date{}

	\maketitle

\begin{abstract}
In this work, we further develop multigoal-oriented a posteriori 
error estimation with two objectives in mind. 
First, 
we formulate goal-oriented mesh
adaptivity for multiple functionals of interest for nonlinear 
problems in which both the Partial Differential Equation (PDE) and the goal 
functionals may be nonlinear. 
%
%
Our method is based  on  a posteriori error estimates
in which the adjoint problem is used and 
a partition-of-unity is employed for the error localization that allows us
to formulate the error estimator in the weak form.
We provide a careful derivation 
of the primal and adjoint parts of the error estimator.
The second objective is concerned with balancing the nonlinear 
iteration error with the discretization error yielding
adaptive stopping rules for Newton's method. 
Our techniques are substantiated with several numerical 
examples including scalar PDEs
and PDE systems,
geometric singularities, and both nonlinear PDEs and 
nonlinear
goal functionals. In these tests, up to six goal functionals 
are simultaneously controlled.
\end{abstract}

\section{Introduction}
\label{Introduction}
A posteriori error estimation and mesh adaptivity 
are well-developed methodologies for finite element 
computations, 
see, e.g., the monographs 
\cite{Verfuerth:1996a,AinsworthOden:2000,BaRa03,NeittaanmaekiRepin:2004a,Han:2005a,RepinBook2008}
and the references therein.
Specifically, 
goal-oriented error estimation is a powerful method 
when 
the evaluation of
certain functionals of interest (often these are technical quantities)
is
the main aim rather 
than 
the computation of
global error norms. Here, the dual-weighted residual (DWR) method 
is often applied
\cite{BeckerRannacher1995,BeRa01}.

Thanks to increasing 
computational resources, multiphysics applications 
such as multiphase flow, porous media applications, fluid-structure 
interaction 
and
electromagnectics 
are currently 
one main focus in applied mathematics and engineering. Here, 
mesh adaptivity (ideally combined with parallel computing) 
can greatly reduce the computational cost while 
measuring functionals of interest with sufficient accuracy. 
Since in multiphysics, several physical phenomena interact, 
it 
might be desirable 
that more than one goal functional shall be controlled. However, 
only a few studies have appeared yet. 
A first methodology was proposed in \cite{HaHou03,Ha08}. 
Other studies can be found in \cite{HouSenSue02,PARDO20101953} and 
more recently in \cite{BruZhuZwie16,EnWi17,KerPruChaLaf2017}. 

Until a few years ago, one principle problem in using the DWR method 
was the fact that the error estimator was based on the strong form of the equations
\cite{BeRa01} or 
the weak form
using special interpolation operators working 
on patched meshes \cite{BraackErn02}. 
In \cite{RiWi15_dwr}, the 
previous localization techniques were analyzed in more detail and additionally
a novel localization strategy based on a partition-of-unity (PU) was proposed.
The PU localization specifically allows for a much simpler application of the DWR method 
to multiphysics and nonlinear, vector-valued equations
\cite{RiWi15_dwr,Wi16_dwr_pff}. In addition, 
the PU-DWR method works well with other discretization techniques 
such as BEM-based FEM \cite{WeiWi18} or the finite cell method \cite{StolfoRademacherSchroeder2017}.
On the other hand, the methodology of the PU-DWR method with multiple 
goal functionals has recently been worked out for linear, scalar-valued problems in \cite{EnWi17}. 

The first goal of this paper is to extend this work 
to nonlinear problems and PDE systems. 
Here, our focus 
is on a careful design of the error estimator that includes 
both
the primal part and the adjoint part. The latter one is often neglected 
in the literature because the evaluation requires additional computational 
cost and renders the method even more expensive. 
It is clear and well-known (see e.g., \cite{BeRa01})
that, in the linear case, the primal and adjoint residuals yield the same error
values, but possibly different locally refined meshes; see
e.g. \cite{RiWi15_dwr}.
In our current work, we will see that the adjoint estimator part is crucial to
obtain good effectivity indices. 
Therefore, this term should not be neglected.

%
	The second objective of this paper is concerned 
with balancing the discretization and the nonlinear iteration error.
In recent years, there has been published some work
on balancing the iteration error (of the linear 
or nonlinear solver) with the discretization error
\cite{ErnVohral2013,BeJohRa95,MeiRaVih109,RanVi2013,RaWeWo10}. 
We base ourselves on \cite{RanVi2013}, and we employ 
specifically the PU localization. 
Consequently,
 the 
DWR method is used to design an adaptive stopping criterion for Newton's
method that is in balance with the estimated discretization error.
The main aspects comprise a careful 
choice of the weighting functions to design 
an appropriate joined goal functional. 
Moreover, we provide all details for the nonlinear solver, which is a Newton-type method 
with backtracking line-search.
	Since we know a solution on the
  previous mesh, we use this solution as initial guess for Newton's method
  yielding a nested iteration. Specifically, nested solution methods or nonlinear nested iterations
were developed, for instance,  in \cite{BeRa01,MultigridHackusch03a}. 
We refer to \cite{MultigridHackusch03a, Reusken1988}
for the analysis of nested iteration methods.

In summary, the goals of this work are two-fold:
\begin{itemize}
\item Design of the PU-DWR method for multigoal-oriented error estimation
for nonlinear problems and PDE systems.
\item Balancing iteration and discretization errors for nonlinear multigoal-oriented
error estimation and mesh adaptivity. The nonlinearities 
may appear in the PDE itself as well as in the goal functionals.
\end{itemize}

The outline of this is paper is as follows:
In Section \ref{Discretization}, our setting is described. 
Next, in Section~\ref{PU-DWR-NONLINEAR-ONEFUNTIONAL}, we describe 
the methodology for one goal functional. This is followed 
by a detailed derivation of a multigoal-oriented approach 
presented in Section \ref{Multigoalfunctionals}. The key 
algorithms are formulated in Section~\ref{sec_alg}. 
In Section \ref{sec_num_tests} several numerical tests substantiate 
our developments. We summarize our work in Section~\ref{sec_concl}.









\section{An abstract setting}
\label{Discretization}
%
%
Let $U$ and $V$ be Banach spaces, and let $\mathcal{A}: U \mapsto V^*$ be a
(possibly) nonlinear 
operator,
where
$V^*$ denotes the dual space
of the Banach space $V$. 
We have in mind nonlinear differential operators $\mathcal{A}$
acting between Sobolev spaces. 
We now consider the following weak formulation of the operator 
equation $\mathcal{A}(u) = 0$ in $V^*$:
Find $u \in U$ such that 
\begin{equation}
\label{Problem:primal}
\mathcal{A}(u)(v)=0 \quad \forall v \in V.
\end{equation}
The discretization of the nonlinear variational problem (\ref{Problem:primal})
can be performed by means of different methods. 
Our favored method is the  Finite Element Method (FEM), 
see also Section~\ref{subsec_SpatialDiscretization}.
The corresponding discrete problem reads as follows:
Find $u_h \in U_h$ such that 
\begin{equation} \label{problem: discrete primal problem}
\mathcal{A}(u_h)(v_h)=0 \quad \forall v_h \in V_h,
\end{equation}
where $U_h$ and  $V_h$ are finite-dimensional subspaces of $U$ and $V$,
respectively.
For the time being, let us assume that both problems are solvable.
Later we will specify our assumptions imposed on $\mathcal{A}$.
We are primarily not interested in approximating a solution $u$
of (\ref{Problem:primal}), but in the approximate computation of one or more possibly nonlinear functionals at a solution.

An example for such an operator $\mathcal{A}$ is given by the weak formulation
of the regularized $p$-Laplace equation 
(see also \cite{DiRu07,Hi2013,ToWi2017})  
that reads as follows:
Find $u \in U := W^{1,p}_0(\Omega)$ such that
\begin{align}
\label{pLaplace:weak}
  \mathcal{A}(u)(v) :=& \langle{(\varepsilon^2+ |\nabla u|^2)^{\frac{p-2}{2}}\nabla u,\nabla v\rangle}_{(L^p(\Omega))^*\times L^p(\Omega)}-\langle{f,v\rangle}_{ (W^{1,p}_0(\Omega))^*\times W^{1,p}_0(\Omega)}
  = 0
\end{align}
for all $v \in  V := W^{1,p}_0(\Omega)$,
where $\varepsilon$ denotes  a fixed positive regularization parameter,
$f \in (W^{1,p}_0(\Omega))^*= W^{-1,q}(\Omega)$ is some given source,
with $p^{-1} + q^{-1}=1$ and fixed $p>1$,
and $\langle{\cdot,\cdot}\rangle$ denots the corresponding duality products.
Here, $\Omega \subset \mathbb{R}^d$, $d=1,2,3$, is a bounded Lipschitz domain, 
and  $W^{1,p}_0(\Omega)$ denotes the usual Sobolev space of all functions
from the Lebesgue space $L^p(\Omega)$ with weak derivatives in $L^p(\Omega)$
and trace zero on the boundary $\partial \Omega$,
see, e.g., \cite{Adams2003sobolev}.
The notation $ | \cdot | $ is used for the Euclidean norm of some vector.
The corresponding strong form is formally given by
\begin{align*}
  -\text{div}((\varepsilon^2 + |\nabla u|^2)^{\frac{p-2}{2}}\nabla u) &=f  \quad  \mbox{in}\;\Omega, \\
  u &= 0 \quad  \mbox{on}\; \partial \Omega .
\end{align*}
In Subsection~\ref{subsection: Example1}, 
the regularized $p$-Laplace~(\ref{pLaplace:weak}) serves as first example 
for our numerical experiments.
\begin{remark}
We refer the reader to \cite{Glowinski1975} for the investigation
of the original $p$-Laplace problem.
\end{remark}

 \section{The dual weighted residual method for nonlinear problems 
 in the case of a single-goal functional} 
\label{PU-DWR-NONLINEAR-ONEFUNTIONAL}
In this section, we apply the DWR method to nonlinear
problems. The general method was developed in \cite{BeRa01}. 
The extension to balance discretization and iteration errors was undertaken 
in \cite{MeiRaVih109,RaWeWo10,RanVi2013}. We base ourselves 
on the latter study \cite{RanVi2013}, in which algorithms for 
nonlinear problems have been worked out. This last paper, together 
with \cite{RiWi15_dwr,EnWi17}, form the basis of the current paper.
We are interested in the goal functional evaluation $J:U\to\mathbb{R}$
with $u\mapsto J(u)$, where $u\in U$ is a solution of the primal problem (\ref{Problem:primal}).
Examples for such goal functionals are:
\begin{itemize}
	\item point evaluation: $$	J(u):= u(x_0), $$
	\item integral evaluation: $$  J(u):= \int_{\Omega}^{}u(x)\xi(x)\,dx, $$
	\item nonlinear functional evaluation: $$  J(u):= \int_{\Omega}^{}u(x)\xi(x)u(x_0)^2 \,dx\int_{\Omega}^{}u(y)\phi(y)\,dy, $$
\end{itemize}
where $\xi$ and $\phi$ are given functions from $L^2(\Omega)$ and $x_0$ a given point in $\Omega$.
For the DWR approach we need to solve the adjoint problem:
Find $z \in V$ corresponding to $u \in U$ such that
\begin{equation}\label{Problem: adjoint}
\mathcal{A}'(u)(v,z)=J'(u)(v) \quad \forall v \in U,
\end{equation}
where $u$ denotes 
a (primal) solution of the primal problem (\ref{Problem:primal}), and $\mathcal{A'}(u)$ and $J'(u)$ denote the Fr\'echet-derivatives of the nonlinear operator or functional, respectively, evaluated at $u$.
Later we also need the corresponding discrete solution of the adjoint problem. 
This reads as follows: Find $z_h \in V_h$ corresponding to $u_h \in U_h$ such that 
	\begin{equation}\label{Problem: discrete adjoint on small space}
	\mathcal{A}'(u_h)(v_h,z_h)=J'(u_h)(v_h) \quad \forall v_h \in U_h,
	\end{equation}
	with $u_h$ as a solution of (\ref{problem: discrete primal problem}).

Similarly to  the findings in \cite{RanVi2013,BeRa01,RaWeWo10} for the Galerkin
case ($U=V$),  we derive  an error representation in the following theorem:
\begin{theorem}\label{Theorem: Error Representation}
	Let us assume that $\mathcal{A} \in \mathcal{C}^3(U,V)$ and $J \in \mathcal{C}^3(U,\mathbb{R})$. 
	If $u$ solves (\ref{Problem:primal}) and $z$ solves (\ref{Problem: adjoint})
	for $u \in U$, 
	then it holds for arbitrary fixed  $\tilde{u} \in U$ and $ \tilde{z} \in V$ :
	\begin{align} \label{Error Representation}
		\begin{split}
		J(u)-J(\tilde{u})&= \frac{1}{2}\rho(\tilde{u})(z-\tilde{z})+\frac{1}{2}\rho^*(\tilde{u},\tilde{z})(u-\tilde{u}) 
		-\rho (\tilde{u})(\tilde{z}) + \mathcal{R}^{(3)},
		\end{split}
	\end{align}
	 where
	\begin{align}
		\label{Error Estimator: primal}
		\rho(\tilde{u})(\cdot) &:= -\mathcal{A}(\tilde{u})(\cdot), \\
		\label{Error Estimator: adjoint}
		\rho^*(\tilde{u},\tilde{z})(\cdot) &:= J'(u)-\mathcal{A}'(\tilde{u})(\cdot,\tilde{z}), 	
	\end{align}
	and the remainder term
	\begin{equation}
			\begin{split}	\label{Error Estimator: Remainderterm}
			\mathcal{R}^{(3)}:=\frac{1}{2}\int_{0}^{1}[J'''(\tilde{u}+se)(e,e,e)
			-\mathcal{A}'''(\tilde{u}+se)(e,e,e,\tilde{z}+se^*)
			-3\mathcal{A}''(\tilde{u}+se)(e,e,e)]s(s-1)\,ds,
			\end{split} 
	\end{equation}
with $e=u-\tilde{u}$ and $e^* =z-\tilde{z}$.
\end{theorem}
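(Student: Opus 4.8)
The plan is to prove this identity by a one-dimensional Taylor expansion with integral remainder along the segment joining $\tilde u$ to $u$ (and simultaneously $\tilde z$ to $z$), following the classical DWR argument of Becker–Rannacher. The natural device is to introduce the Lagrangian-type functional
\[
  x = (w,q)\in U\times V,\qquad \mathcal{M}(w,q) := J(w) - \mathcal{A}(w)(q),
\]
so that stationarity of $\mathcal{M}$ recovers both the primal equation $\mathcal{A}(u)(\cdot)=0$ and the adjoint equation $\mathcal{A}'(u)(\cdot,z)=J'(u)(\cdot)$. The first observation is that, because $\mathcal{A}(u)(q)=0$ for the exact primal solution $u$ and every $q\in V$, we have $\mathcal{M}(u,z) = J(u)$ and also $\mathcal{M}(\tilde u,\tilde z) = J(\tilde u) - \mathcal{A}(\tilde u)(\tilde z) = J(\tilde u) + \rho(\tilde u)(\tilde z)$. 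Hence
\[
  J(u) - J(\tilde u) = \mathcal{M}(u,z) - \mathcal{M}(\tilde u,\tilde z) - \rho(\tilde u)(\tilde z),
\]
which already isolates the term $-\rho(\tilde u)(\tilde z)$ appearing in \eqref{Error Representation}; it remains to analyze $\mathcal{M}(u,z)-\mathcal{M}(\tilde u,\tilde z)$.

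For that difference I would set $x = (u,z)$, $\tilde x = (\tilde u,\tilde z)$, $\delta x = x - \tilde x = (e,e^*)$, and apply the trapezoidal-rule form of Taylor's theorem to the scalar function $g(s) := \mathcal{M}(\tilde x + s\,\delta x)$ on $[0,1]$:
\[
  g(1) - g(0) = \tfrac12\bigl(g'(0)+g'(1)\bigr) - \tfrac12\int_0^1 g'''(s)\,s(s-1)\,ds.
\]
Here $g'(1) = \mathcal{M}'(x)(\delta x) = 0$ because $x=(u,z)$ is a stationary point of $\mathcal{M}$ (this uses precisely that $u$ solves \eqref{Problem:primal} and $z$ solves \eqref{Problem: adjoint}, and note the adjoint equation is tested over all of $U$, matching the space in which $e$ lives). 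The term $\tfrac12 g'(0) = \tfrac12\,\mathcal{M}'(\tilde x)(\delta x)$ expands, by the product structure $\mathcal{M}(w,q)=J(w)-\mathcal{A}(w)(q)$, into
\[
  \tfrac12\bigl[J'(\tilde u)(e) - \mathcal{A}'(\tilde u)(e,\tilde z) - \mathcal{A}(\tilde u)(e^*)\bigr]
  = \tfrac12\,\rho^*(\tilde u,\tilde z)(e) + \tfrac12\,\rho(\tilde u)(e^*),
\]
using the definitions \eqref{Error Estimator: primal}–\eqref{Error Estimator: adjoint} — this reproduces the first two terms of \eqref{Error Representation} with $e=u-\tilde u$ and $e^*=z-\tilde z$. (One should note the slight abuse of notation in \eqref{Error Estimator: adjoint} where $J'(u)$ is written rather than $J'(\tilde u)$; I would either read it as $J'(\tilde u)$ evaluated at the argument $\cdot$, or reconcile it by absorbing the difference $J'(u)-J'(\tilde u)$, but this is a cosmetic point, not a real step.)

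The remaining task is to identify $-\tfrac12\int_0^1 g'''(s)\,s(s-1)\,ds$ with $\mathcal{R}^{(3)}$. Differentiating $g(s)=J(\tilde u + se) - \mathcal{A}(\tilde u+se)(\tilde z+se^*)$ three times in $s$: the $J$-part gives $J'''(\tilde u+se)(e,e,e)$; the $\mathcal{A}$-part, being bilinear in (state, adjoint) in the sense that $q$ enters linearly, produces by the Leibniz rule the term $\mathcal{A}'''(\tilde u+se)(e,e,e,\tilde z+se^*)$ from the three state-derivatives plus $3\,\mathcal{A}''(\tilde u+se)(e,e,e^*)$ from the two-state/one-adjoint combinations (the all-adjoint derivatives vanish since $\mathcal{A}$ is affine, indeed linear, in $q$). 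Collecting signs gives exactly the integrand of \eqref{Error Estimator: Remainderterm}, and the $\mathcal{C}^3$ hypotheses on $\mathcal{A}$ and $J$ guarantee $g\in C^3([0,1])$ so that all these derivatives exist and the integral representation of the remainder is valid. I expect the only genuinely delicate point to be the careful bookkeeping of the mixed third derivatives of the bilinear term $\mathcal{A}(w)(q)$ — making sure the combinatorial factor $3$ and the precise slot in which $\tilde z+se^*$ versus $e^*$ appears come out right — together with the minor notational reconciliation in $\rho^*$ mentioned above; everything else is the standard trapezoidal-Taylor identity applied to a Lagrangian.
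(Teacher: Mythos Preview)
Your approach is essentially identical to the paper's: the paper also introduces the Lagrangian $\mathcal{L}(\hat u,\hat z)=J(\hat u)-\mathcal{A}(\hat u)(\hat z)$, applies the trapezoidal rule with remainder $\int_0^1 f(s)\,ds=\tfrac12(f(0)+f(1))+\tfrac12\int_0^1 f''(s)\,s(s-1)\,ds$ to $f(s)=\mathcal{L}'(\tilde x+s(x-\tilde x))(x-\tilde x)$, and then uses stationarity of $\mathcal{L}$ at $(u,z)$ to kill the endpoint term at $s=1$. Note that the remainder sign in your trapezoidal identity should be $+\tfrac12\int_0^1 g'''(s)\,s(s-1)\,ds$ rather than minus (and you are right that the last slot of $\mathcal{A}''$ in $\mathcal{R}^{(3)}$ should carry $e^*$, as your Leibniz computation shows).
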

\begin{proof}
	For the completeness of the presentation we add the proof below, which is very similar to \cite{RanVi2013}.
	First we define $x := (u,z) \in  X:=U \times V$ and $\tilde{x}:=(\tilde{u},\tilde{v}) \in X$.
	By assuming that $\mathcal{A} \in \mathcal{C}^3(U,V)$ and $J \in
	\mathcal{C}^3(U,\mathbb{R})$ we know that the Lagrange function 
	\begin{equation*}
	\mathcal{L}(\hat{x}):= J(\hat{u})-\mathcal{A}(\hat{u})(\hat{z}) \quad \forall (\hat{u},\hat{z})=:\hat{x} \in X,
	\end{equation*}
	is in $\mathcal{C}^3(X,\mathbb{R})$. Assuming this it holds
	\begin{equation*}
	\mathcal{L}(x)-\mathcal{L}(\tilde{x})=\int_{0}^{1} \mathcal{L}'(\tilde{x}+s(x-\tilde{x}))(x-\tilde{x})\,ds.
	\end{equation*}
	Using  the trapezoidal rule \cite{RanVi2013}, we obtain
	\begin{equation*}
	\int_{0}^{1}f(s)\,ds =\frac{1}{2}(f(0)+f(1))+ \frac{1}{2} \int_{0}^{1}f''(s)s(s-1)\,ds,
	\end{equation*}
	for $f(s):= \mathcal{L}'(\tilde{x}+s(x-\tilde{x}))(x-\tilde{x})$ we conclude
	\begin{align*}
	\mathcal{L}(x)-\mathcal{L}(\tilde{x}) =& \frac{1}{2}(\mathcal{L}'(x)(x-\tilde{x}) +\mathcal{L}'(\tilde{x})(x-\tilde{x})) + \mathcal{R}^{(3)}.
	\end{align*}
	From the definition of $\mathcal{L}$ we observe that
		\begin{align*}
		J(u)-J(\tilde{u})=\mathcal{L}(x)-\mathcal{L}(\tilde{x}) +\underbrace{A(u)(z) }_{=0} + A(\tilde{u})(\tilde{z}) =& \frac{1}{2}(\mathcal{L}'(x)(x-\tilde{x}) +\mathcal{L}'(\tilde{x})(x-\tilde{x})) +A(\tilde{u})(\tilde{z})+ \mathcal{R}^{(3)}.
		\end{align*}
		It remains to show that $\frac{1}{2}(\mathcal{L}'(x)(x-\tilde{x}) +\mathcal{L}'(\tilde{x})(x-\tilde{x})) = \frac{1}{2}\rho(\tilde{u})(z-\tilde{z})+\frac{1}{2}\rho^*(\tilde{u},\tilde{z})(u-\tilde{u}) $.
		But this is true since
		\begin{align*}
		\mathcal{L}'(x)(x-\tilde{x}) +\mathcal{L}'(\tilde{x})(x-\tilde{x}) = & \underbrace{J'(u)(e)-\mathcal{A}'(u)(e,z)}_{=0}-\underbrace{A(u)(e^*)}_{=0}+\underbrace{J'(\tilde{u})(e)-\mathcal{A}'(\tilde{u})(e,\tilde{z})}_{=\rho^*(\tilde{u},\tilde{z})(u-\tilde{u})}-\underbrace{A(\tilde{u})(e^*)}_{=-\rho(\tilde{u})(z-\tilde{z})}.
		\end{align*}
\end{proof}

	\begin{remark}
	Instead of $\mathcal{A} \in \mathcal{C}^3(U,V)$ and $J \in \mathcal{C}^3(U,\mathbb{R})$ it is sufficient that $\mathcal{A} \in \mathcal{C}^2(U,V)$, $J \in \mathcal{C}^2(U,\mathbb{R})$ and $J''',$ $ \mathcal{A'''}$ exist and  are bounded.
		Moreover one can further relax these assumptions. Indeed the boundedness of the derivatives is just needed in the set $\{ w \in U| w=(1-s)u+s\tilde{u}  \}$ and just in direction $u-\tilde{u}$.
	\end{remark}
	\begin{remark}
		It might happen that $\mathcal{A} \in \mathcal{C}^3(U,V)$ and $J \in \mathcal{C}^3(U,\mathbb{R})$ do not hold for the continuous spaces.
		Since the result holds for general Banach spaces $U$ and $V$,
                it is sufficient to be shown for the discrete spaces $U_{h,u},V_{h,z}$, where $U_{h,u}:=\{w+cu| w \in U_h, c \in \mathbb{R}\},V_{h,z}:=\{v+cz|v\in V_h, c \in \mathbb{R}\}$. 
	\end{remark}

\begin{remark}
	In accordance with the literature, we  denote the parts
	$\rho(\tilde{u})(z-\tilde{z})$ and
	$\rho^*(\tilde{u},\tilde{z})(u-\tilde{u})$
	by \textit{primal error
		estimator} and \textit{adjoint error estimator},
	respectively. 
The remainder term $\mathcal{R}^{(3)}$, as in (\ref{Error Estimator:
  Remainderterm}), is of the order $\mathcal{O}(\Vert e \Vert_U^2
\text{max}(\Vert e \Vert_U, \Vert e^* \Vert_V))$. Therefore, it can be
neglected if $\{\tilde{u},\tilde{z}\}$ are close enough to $\{u,z\}$.  
\end{remark}

As in \cite{RanVi2013} ,we can identify

 		\begin{equation} \label{Error Estimator: discretization}
 		\eta_h:=|\frac{1}{2}\rho(\tilde{u})(z-\tilde{z})+\frac{1}{2}\rho^*(\tilde{u},\tilde{z})(u-\tilde{u})|,
 		\end{equation}
 		as the \textit{discretization error} and \begin{equation}
 		\eta_m:=|\rho(\tilde{u})(\tilde{z})|,
 		\end{equation} as the \textit{linearization error} if we neglect the remainder term $\mathcal{R}^{(3)}$.
 		Since Theorem \ref{Theorem: Error Representation} is valid for arbitrary $\tilde{z}$ and $\tilde{u}$ it also holds for approximations $u_h$ and $z_h$, even if they are not computed exactly.
 		Of course, formula (\ref{Error Estimator: discretization}) still contains an exact solution $u$. Since $u$ is not known, we either use an approximation in an enriched discrete space (for example, in a finite element space, with higher polynomial degree), or we use an  interpolant $I_h^{h_2}$, such as  in \cite{BeRa01}, to obtain a more accurate solution $u_h^{(2)}$. If not mentioned otherwise, we use the approximation in the enriched (finite element) space. An enriched discrete space is also used to compute an approximation $z_h^{(2)}$ of $z$. 
If one would use the same finite-dimensional space as for the test space used in the 
discrete primal problem (\ref{problem: discrete primal problem}), then $\mathcal{A}(u_h)(z_h)=0 $ for our approximate solution $u_h$ of (\ref{problem: discrete primal problem}) (if the nonlinear problem is solved exactly).
 Therefore, the discrete
                 adjoint problem reads as follows: 
 		Find $z_h^{(2)}\in V_h^{(2)}$ such that
 		\begin{equation}\label{Problem: discrete adjoint}
 		\mathcal{A}'(u_h^{(2)})(v_h^{(2)},z_h^{(2)})=J'(u_h^{(2)})(v_h^{(2)})\quad \forall v_h^{(2)} \in U_h^{(2)},
 		\end{equation}
 		where $U_h^{(2)}$  and $V_h^{(2)}$ denote the enriched finite
                dimensional spaces, and $u_h^{(2)}$ denotes the more accurate
                solution, obtained by solving (\ref{problem: discrete primal
                  problem}) with $U_h=U_h^{(2)}$ and $V_h=V_h^{(2)}$ or by
                interpolation $u_h^{(2)}=I_h^{h_2}u_h$. 
With these approximations, the practical error estimator reads:
 		\begin{equation} \label{Error Estimator: discretization inexact}
 		\eta_h:=|\frac{1}{2}\rho(u_h)(z_h^{(2)}-z_h)+\frac{1}{2}\rho^*(u_h,z_h)(u_h^{(2)}-u_h)|.
 		\end{equation}
 	For localization of the error estimator, we use the partition of unity (PU) technique which is presented in \cite{RiWi15_dwr}. This means that we choose a set of functions $\{\psi_1, \psi_2,  \cdots,\psi_N\}$ such that $	\sum_{i=1}^{N} \psi_i \equiv 1.$
 		Inserting this into (\ref{Error Estimator: discretization inexact}) leads to
 		\begin{equation} \label{Local Error Estimator: discretization}
 		\eta_{h}:=|\sum_{i=1}^{N}\eta_i|,
 		\end{equation}
 		with 
                \begin{equation}
                  \label{eta_i_PU}
                  \eta_i:=\frac{1}{2}\rho(\tilde{u})((z_h^{(2)}-\tilde{z})\psi_i)+\frac{1}{2}\rho^*(\tilde{u},\tilde{z})((u_h^{(2)}-\tilde{u})\psi_i).
                \end{equation}
                We notice that in the primal part of the error
                  indicator $\tilde{z}$ is replaced by $i_h z_h^{(2)}$ as in
                  \cite{BeRa01}. 
 		For instance, a typical partition of unity is given by the finite element basis.
 		  In this case, we distribute $|\eta_i|$ to the corresponding elements with a certain weight as for example illustrated in Figure~\ref{fig: dist. of nodal error}.
		\begin{figure}[H]
 			\centering
 			\scalebox{0.65}{
	 			\definecolor{qqzzff}{rgb}{0,0.6,1}\definecolor{ffqqtt}{rgb}{1,0,0.2}\definecolor{zzttqq}{rgb}{0.6,0.2,0}\begin{tikzpicture}[line cap=round,line join=round,>=triangle 45,x=1cm,y=1cm]\clip(-4.2823850829861,3.7602936957858014) rectangle (4.23939348405163,12.502011747831432);\fill[line width=2pt,color=zzttqq,fill=zzttqq,fill opacity=0.05000000149011612] (-3.90437811624889,4.040980807321905) -- (4.09562188375111,4.040980807321905) -- (4.095621883751111,12.040980807321905) -- (-3.904378116248889,12.040980807321905) -- cycle;\draw [line width=2pt,color=zzttqq] (4.09562188375111,4.040980807321905)-- (4.095621883751111,12.040980807321905);\draw [line width=2pt,color=zzttqq] (4.095621883751111,12.040980807321905)-- (-3.904378116248889,12.040980807321905);\draw [line width=2pt,color=zzttqq] (4.095621883751111,4.040980807321905)-- (-3.904378116248889,4.040980807321905);\draw [line width=2pt,color=zzttqq] (-3.904378116248889,12.040980807321905)-- (-3.90437811624889,4.040980807321905);\draw [line width=2pt] (-3.
9043781162488895,8.040980807321905)-- (4.095621883751111,8.040980807321905);\draw [line width=2pt] (0,12)-- (0.09562188375111003,4.040980807321905);\draw [->,line width=2pt,color=qqzzff] (0.04756476443214799,8.040980807321905) -- (-2,10);\draw [->,line width=2pt,color=qqzzff] (0.04756476443214799,8.040980807321905) -- (2,10);\draw [->,line width=2pt,color=qqzzff] (0.04756476443214799,8.040980807321905) -- (2,6);\draw [->,line width=2pt,color=qqzzff] (0.04756476443214799,8.040980807321905) -- (-2,6);\begin{scriptsize}\draw [fill=ffqqtt] (0.04756476443214799,8.040980807321905) circle (4.5pt);\draw[color=ffqqtt] (-0.7388539034218957,8.3226237278874394) node {\normalsize$|\eta_i|$};\draw[color=qqzzff] (-0.8730578512396703,9.545853425312479) node {$\frac{1}{4}|\eta_i|$};\draw[color=qqzzff] (1.489642783962842,8.8109250734239465) node {$\frac{1}{4}|\eta_i|$};\draw[color=qqzzff] (0.5344757871730055,6.964588484393144) node {$\frac{1}{4}|\eta_i|$};\draw[color=qqzzff] (-1.1462632333856986,7.360736288504885) node {$\frac{1}{4}|\eta_i|$};\end{scriptsize}\end{tikzpicture}
 			}
 			\caption{Equal distribution of the local error estimator using the  $Q^1_c$ basis function at the central vertex to the corresponding elements as in \cite{RiWi15_dwr}, see also Section~\ref{subsec_SpatialDiscretization}.}\label{fig: dist. of nodal error}
 		\end{figure}
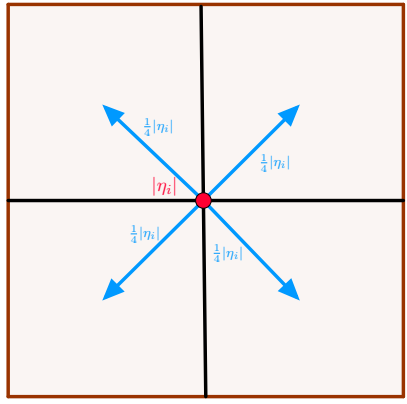

\section{Multiple-goal functionals}
\label{Multigoalfunctionals}
Now let us assume that we are interested in the evaluation of $N$ functionals, which we denote by
$J_1, J_2, \ldots,J_N$.
From Section \ref{PU-DWR-NONLINEAR-ONEFUNTIONAL}, we know how to compute a local error estimator for one functional.
We could compute the local error estimators separately. However, we would
have to solve the adjoint problem (\ref{Problem: adjoint}) $N$ times
\cite{HaHou03,Ha08}.
Let us now  assume that a solution $u$ of problem (\ref{Problem:primal}) and the chosen $\tilde{u} \in U$ belong to $\bigcap_{i=1}^N \mathcal{D}(J_i)$, where $\mathcal{D}(J_i)$ describes the domain of $J_i$. 
\begin{definition}[error-weighting function]
	Let $ M \subseteq \mathbb{R}^N$. 
We say that $\mathfrak{E}: (\mathbb{R}^+_0)^N \times  M \mapsto \mathbb{R}^+_0$ is an \textit{error-weighting
          function} if  $\mathfrak{E}(\cdot,m) \in
        \mathcal{C}^1((\mathbb{R}^+_0)^N,\mathbb{R}^+_0)$ is strictly monotonically
        increasing in each component and $\mathfrak{E} (0,m)=0$ for all
        $m \in M$.
\end{definition}
Let $\vec{J}: \bigcap_{i=1}^N \mathcal{D}(J_i) \subseteq U \mapsto \mathbb{R}^N$ be defined as $\vec{J}(v):=(J_1(v),J_2(v),\cdots, J_{N}(v) )$ for all $v \in \bigcap_{i=1}^N \mathcal{D}(J_i)$. Furthermore, we define the operation $|\cdot|_N:\mathbb{R}^N\mapsto (\mathbb{R}^+_0)^N$ as $|x|_N:= (|x_1|,|x_2|,\cdots,|x_N|)$ for $x \in \mathbb{R}^N $.
This allows us to define the \text{error functional} as follows
\begin{align}\label{ErrorrepresentationFunctional}
\tilde{J}_{\mathfrak{E}}(v):=\mathfrak{E}(|\vec{J}(u)-\vec{J}(v)|_N, \vec{J}(\tilde{u})) \qquad \forall v \in \bigcap_{i=1}^N \mathcal{D}(J_i).
\end{align}
It is trivial to see from  the definition of $\mathfrak{E}$ that $J_\mathfrak{E}(v) \in \mathbb{R}^+_0$ for all $v \in \bigcap_{i=1}^N \mathcal{D}(J_i)$.

\begin{remark}
The idea of the construction of $\tilde{J}_{\mathfrak{E}}(v)$ is that
$\mathfrak{E}(|\vec{J}(u)-\vec{J}(v)|_N, \vec{J}(\tilde{u}))$ is a semi-metric (as in \cite{SierpinskiTopo52,KhaKirMetric2001}) on
the set of equivalence classes
$(\vec{J})^{-1}(\mathcal{R}(\vec{J})):=\{(\vec{J})^{-1}(x): x \in
\mathcal{R}(\vec{J})\}$, where $(\vec{J})^{-1}(x):=\{v \in  \bigcap_{i=1}^N
\mathcal{D}(J_i): \vec{J}(v)=x \}$, with $\mathcal{R}(\vec{J})$ denotes the range
of $\vec{J}$, measuring the distance between the equivalence classes
containing $u$ and $v$. Hence, $\tilde{J}_{\mathfrak{E}}(v)$ represents a
semi-metric distance which ensures that $\tilde{J}_{\mathfrak{E}}$  is
monotonically increasing if $|J_i(u)-J_i(\tilde{u})|$ is monotonically
increasing.	
\end{remark}

\begin{remark}
	If we drop the monotonicity condition in the definition of $\mathfrak{E}$, then, for example, $$\mathfrak{E}(|\vec{J}(u)-\vec{J}(v)|_N, \vec{J}(\tilde{u})):= \prod_{i=0}^{N} |J_i(u)-J_i(v)|,$$ would be an error-weighting function, resulting in $J_\mathfrak{E}(\tilde{u})=0$ iff $J_i(u)=J_i(\tilde{u})$ at least for one $i \in \{1,2, \cdots ,N\} $. \end{remark}

\begin{remark}
	The derivation given in this section holds for a general $\tilde{u}$ such that $\vec{J}(\tilde{u}) \in M$. In particular, we are interested in $\tilde{u}$ to be an approximation to $u_h$ solving (\ref{problem: discrete primal problem}).
\end{remark}
The weak derivative of (\ref{ErrorrepresentationFunctional}) in $U$  at $\tilde{u}$ is given by
\begin{align}\label{ErrorrepresentationFunctionalJ'}
\tilde{J}_{\mathfrak{E}}'(\tilde{u})(v):=-\sum_{i=1}^{N}\text{ sign}(J_i(u)-J_i(\tilde{u}))\frac{\partial \mathfrak{E}}{\partial x_i}(|\vec{J}(u)-\vec{J}(\tilde{u})|_N, \vec{J}(\tilde{u})) J_i'(\tilde{u})(v)  \qquad \forall v \in \mathcal{D}(\tilde{J}_{\mathfrak{E}}'(\tilde{u})),
\end{align}
with 

\begin{equation} \label{sign}
\text{sign}(x):=\begin{cases}
	\frac{x}{|x|},	\quad \text{for }x\not =0 ,\\
	0 \quad else
	\end{cases} 
\end{equation}

In \cite{HaHou03,Ha08,EnWi17}, the functionals where combined as follows
\begin{equation}\label{J_c}
\tilde{J}_c(v):=\sum_{i=1}^{N} \underbrace{\frac{\omega_i\text{ sign}(J_i(u)-J_i(\tilde{u}))}{|J_i(\tilde{u})|}}_{=:w_i }J_i(v) \quad \forall v\in \bigcap_{i=0}^N \mathcal{D}(J_i).
\end{equation}

%

Carefully inspecting \cite{Ha08}, we see that the following result can be established:
\begin{propositon}\label{JeJc}
	If $\tilde{J}_c$ is defined as in (\ref{J_c}) and $\tilde{J}_{\mathfrak{E}}$ as in (\ref{ErrorrepresentationFunctional}), then we have
	\begin{align}
	\tilde{J}_c(u)-\tilde{J}_c(\tilde{u})&= \tilde{J}_{\mathfrak{E}}(\tilde{u}),  \label{Jcu-Jcuh=Jeuh}\\
	-\tilde{J}_c'(\tilde{u})(v)&= \tilde{J}_{\mathfrak{E}}'(\tilde{u})(v), \qquad \forall v\in \mathcal{D}(\tilde{J}_{c}'(\tilde{u}))\cap\mathcal{D}(\tilde{J}_{\mathfrak{E}}'(\tilde{u})) ,\label{Jc'uh=Je'uh}\\
	 \mathcal{D}(\tilde{J}_{c}'(\tilde{u}))&=\mathcal{D}(\tilde{J}_{\mathfrak{E}}'(\tilde{u})) \label{DJe=DJc}
	\end{align}
	with $\mathfrak{E}(x,\vec{J}(\tilde{u})):= \sum_{i=1}^{N} \frac{\omega_i x_i}{|J_i(\tilde{u})|}$.
	\begin{proof}[{Proof}]
		First we conclude that 
		\begin{align*}
		\tilde{J}_c(u)-\tilde{J}_c(\tilde{u}) &= \sum_{i=1}^{N} \frac{\omega_i\text{ sign}(J_i(u)-J_i(\tilde{u}))}{|J_i(\tilde{u})|}(J_i(u)-J_i(\tilde{u}))\\
		&= \sum_{i=1}^{N}
                \frac{\omega_i|J_i(u)-J_i(\tilde{u})|}{|J_i(\tilde{u})|}\\ 
                &=
                \mathfrak{E}(|\vec{J}(u)-\vec{J}(\tilde{u})|_N,\vec{J}(\tilde{u})) 
                =\tilde{J}_{\mathfrak{E}}(\tilde{u}),
		\end{align*}
		which already shows (\ref{Jcu-Jcuh=Jeuh}).
		The weak derivative of $\tilde{J}_c$ is given by
		\begin{equation} \label{J_c'}
		\tilde{J}_c'(\tilde{u})(v)=\sum_{i=1}^{N} \frac{\omega_i\text{ sign}(J_i(u)-J_i(\tilde{u}))}{|J_i(\tilde{u})|}J_i'(\tilde{u})(v).
		\end{equation}
		From $\frac{\partial \mathfrak{E}}{\partial x_i}(|\vec{J}(u)-\vec{J}(\tilde{u})|_N, \vec{J}(\tilde{u}))= \frac{\omega_i}{|J_i(\tilde{u})|}$ for all $i \in \{1,2,\cdots, N \}$, and because (\ref{J_c'}) and (\ref{ErrorrepresentationFunctionalJ'}) coincide up to the sign, it holds that (\ref{Jc'uh=Je'uh}) and (\ref{DJe=DJc}) are valid.
	\end{proof}
\end{propositon}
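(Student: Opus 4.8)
The plan is to prove the three claims (\ref{Jcu-Jcuh=Jeuh})–(\ref{DJe=DJc}) by direct computation, using the explicit choice $\mathfrak{E}(x,\vec{J}(\tilde{u})):= \sum_{i=1}^{N} \frac{\omega_i x_i}{|J_i(\tilde{u})|}$ and comparing termwise with the definitions of $\tilde{J}_c$ in (\ref{J_c}) and $\tilde{J}_{\mathfrak{E}}$ in (\ref{ErrorrepresentationFunctional}). First I would establish (\ref{Jcu-Jcuh=Jeuh}): plug $u$ and $\tilde u$ into (\ref{J_c}), form the difference, and note that for each $i$ the factor $\text{sign}(J_i(u)-J_i(\tilde u)) \cdot (J_i(u)-J_i(\tilde u))$ collapses to $|J_i(u)-J_i(\tilde u)|$ by (\ref{sign}); summing the resulting terms $\omega_i|J_i(u)-J_i(\tilde u)|/|J_i(\tilde u)|$ is exactly $\mathfrak{E}(|\vec J(u)-\vec J(\tilde u)|_N,\vec J(\tilde u)) = \tilde J_{\mathfrak{E}}(\tilde u)$.

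Next I would handle the derivative identities. Since $\tilde J_c$ as defined in (\ref{J_c}) has coefficients $w_i$ that do not depend on $v$ (the dependence on $u,\tilde u$ is through fixed numbers), its weak Gâteaux derivative at $\tilde u$ is simply $\tilde J_c'(\tilde u)(v)=\sum_{i=1}^N \frac{\omega_i\,\text{sign}(J_i(u)-J_i(\tilde u))}{|J_i(\tilde u)|} J_i'(\tilde u)(v)$, which is (\ref{J_c'}); correspondingly its domain $\mathcal D(\tilde J_c'(\tilde u))$ is exactly the set of $v$ for which every $J_i'(\tilde u)(v)$ that appears with a nonzero coefficient is defined. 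On the other side, for the chosen $\mathfrak{E}$ one computes $\partial \mathfrak{E}/\partial x_i (|\vec J(u)-\vec J(\tilde u)|_N,\vec J(\tilde u)) = \omega_i/|J_i(\tilde u)|$, a constant, so that (\ref{ErrorrepresentationFunctionalJ'}) reduces to $\tilde J_{\mathfrak{E}}'(\tilde u)(v) = -\sum_{i=1}^N \frac{\omega_i\,\text{sign}(J_i(u)-J_i(\tilde u))}{|J_i(\tilde u)|} J_i'(\tilde u)(v)$. Comparing with (\ref{J_c'}) gives $-\tilde J_c'(\tilde u)(v) = \tilde J_{\mathfrak{E}}'(\tilde u)(v)$, proving (\ref{Jc'uh=Je'uh}), and since the two expressions are built from exactly the same linear functionals $J_i'(\tilde u)$ with matching nonzero/zero coefficient pattern, their natural domains coincide, giving (\ref{DJe=DJc}).

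The main subtlety — and the step I would be most careful about — is the bookkeeping of the $\text{sign}$ function and the domains when some $J_i(u)=J_i(\tilde u)$. In that case the $i$-th coefficient $w_i$ vanishes, the corresponding term drops out of both sums, and one must make sure the domain statements (\ref{DJe=DJc}) are read accordingly (the $i$-th term contributes nothing, so $v$ need not lie in $\mathcal D(J_i'(\tilde u))$ for those indices); this is exactly why (\ref{Jc'uh=Je'uh}) is stated on the intersection of domains. One also needs $\vec J(\tilde u)\in M$ and $|J_i(\tilde u)|\neq 0$ for all $i$ so that $\mathfrak{E}$ and $w_i$ are well defined — this is an implicit standing assumption for this combined functional, and I would state it explicitly at the start of the proof. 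Apart from that, everything is a termwise identity and there is no genuine analytic obstacle; the construction is arranged precisely so that the single combined adjoint problem driven by $\tilde J_c$ reproduces, via Theorem~\ref{Theorem: Error Representation}, the error representation for the semi-metric error functional $\tilde J_{\mathfrak{E}}$.
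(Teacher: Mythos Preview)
Your proposal is correct and follows essentially the same approach as the paper: a direct termwise computation using $\text{sign}(a)\cdot a=|a|$ for (\ref{Jcu-Jcuh=Jeuh}), then differentiating $\tilde J_c$ with its constant coefficients and comparing with (\ref{ErrorrepresentationFunctionalJ'}) after computing $\partial\mathfrak{E}/\partial x_i=\omega_i/|J_i(\tilde u)|$ to obtain (\ref{Jc'uh=Je'uh}) and (\ref{DJe=DJc}). Your additional care about the degenerate case $J_i(u)=J_i(\tilde u)$ and the standing assumption $|J_i(\tilde u)|\neq 0$ is welcome but does not change the argument.
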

\begin{remark} 
	$\mathfrak{E}(x,\vec{J}(\tilde{u})):= \sum_{i=1}^{N} \frac{\omega_i
    x_i}{|J_i(\tilde{u})|}$ 
is an error-weighting function with $M:=\{x \in \mathbb{R}^N: \min(|x|)>0\}$ provided that  $\omega_i > 0$ for all $i=1,2, \ldots, N$.
\end{remark}
\begin{remark}
	Proposition \ref{JeJc} does not use the property that $u$ solves (\ref{Problem:primal}). We just need that $u \in \bigcap_{i=0}^N \mathcal{D}(J_i)$. However, the goal is to measure the error to an exact solution.
\end{remark}
Since an exact solution $u$ is not known, neither $\tilde{J}_c$ nor $\tilde{J}_{\mathfrak{E}}$ can be constructed. As in Section \ref{PU-DWR-NONLINEAR-ONEFUNTIONAL}, we use the approximation $u_h^{(2)}$ instead of an exact solution $u$  to approximate $\tilde{J}_c$ or $\tilde{J}_{\mathfrak{E}}$ by $J_c$ and $J_{\mathfrak{E}}$, respectively. This approximation reads as follows
\begin{align}\label{ErrorrepresentationFunctionalapprox}
J_{\mathfrak{E}}(v):=\mathfrak{E}(|\vec{J}(u_h^{(2)})-\vec{J}(v)|_N, \vec{J}(\tilde{u})) \qquad \forall v \in \bigcap_{i=1}^N \mathcal{D}(J_i),
\end{align}
with the derivative
\begin{align}\label{ErrorrepresentationFunctionalapprox'}
J_{\mathfrak{E}}'(\tilde{u})(v):=-\sum_{i=1}^{N}\text{ sign}(J_i(u_h^{(2)})-J_i(\tilde{u}))\frac{\partial \mathfrak{E}}{\partial x_i}(|\vec{J}(u_h^{(2)})-\vec{J}(\tilde{u})|_N, \vec{J}(\tilde{u})) J_i'(\tilde{u})(v)  \qquad \forall v \in \mathcal{D}(\tilde{J}_{\mathfrak{E}}'(\tilde{u})).
\end{align}
Using this approximation of the error functional, we can apply the methods for the single-functional case in Section \ref{PU-DWR-NONLINEAR-ONEFUNTIONAL} with $J=J_{\mathfrak{E}}$. 
\begin{remark}
	We notice that Theorem \ref{Theorem: Error Representation}
 formally does not hold for $\tilde{J}_{\mathfrak{E}}$ since the sign-function enters. However, if
 $\mathfrak{E}(\cdot,m) \in \mathcal{C}^3((\mathbb{R}^+_0)^N,\mathbb{R}^+_0)$  and the functionals are sufficiently smooth, then the
 singularities (due to the signum function) in higher derivatives of
  $J_{\mathfrak{E}}$ just appear if $J_i(u)=J_i(u_h)$, or more precisely
 $J_i(u_h^{(2)})=J_i(u_h)$, 
since we use the better approximation $u_h^{(2)}$ instead of $u$.
Alternatively, we can replace the signum function with a sufficiently smooth
approximation. 
\end{remark}

\section{Algorithms}
\label{sec_alg}
We now describe the algorithmic realization of the previous 
methods when we use the FEM as spatial discretization.
To this end,
we first introduce the finite element (FE) discretizations 
that we are going to use 
in our numerical experiments presented in Section~\ref{sec_num_tests}.
Then we
recapitulate the basic structure of Newton's method including 
a line search procedure. Afterwards, we state the 
adaptive Newton algorithm for multiple-goal functionals followed 
by the structure of the final algorithm.


\subsection{Spatial discretization} 
\label{subsec_SpatialDiscretization}

For simplicity, we assume that $\Omega \subset \mathbb{R}^d$ is a polyhedral domain.
Let $\mathcal{T}_h$ be a subdivision (trianglation) of $\Omega$ into quadrilateral elements 
such that $\bigcup_{K \in \mathcal{T}_h}\overline{K}=\overline{\Omega}$
and $K \cap K'= \emptyset$ for all $K,K' \in \mathcal{T}_h$ with $K \neq K'$.
Furthermore, let $\psi_K$ be a multilinear  mapping from the reference domain $\hat{K}=(0,1)^d$ to the element $K \in \mathcal{T}_h$.
We now define the space $Q_c^r$  as
\begin{equation}\label{FE-space-part}
	 Q_c^{r}:=\{ v_h \in C(\overline{\Omega}): v_{h|K} \in Q_r(K), \,   \forall K \in \mathcal{T}_h\},
\end{equation}
with $Q_r(K):=\{v_{|\hat{K}}\circ\psi_K^{-1}:\, v(\hat{x})= \prod_{i=1}^{d} (\sum_{\beta=0}^{r} c_{\beta,i}\hat{x}_i^{\beta}),\, c_{\beta,i} \in \mathbb{R}\}$.
Specifically, we use continuous 
tensor-product finite elements as described in
\cite{Ciarlet:2002:FEM:581834} and  \cite{Braess}.
We also refer the reader to \cite{ArnBofFal2002} 
for the specific approximation properties of 
these finite element spaces.
Let $\mathcal{T}_h^l$ be the triangulation of refinement level $l$. 
Then our finite element spaces are given by $U_h^l:= U \cap Q_c^r$ and $V_h^l:= V \cap Q_c^r$,
whereas the enriched finite element spaces 
are defined by  $U_h^{l,(2)}:= U \cap Q_c^{\tilde{r}}$ and  $V_h^{l,(2)}:= V \cap Q_c^{\tilde{r}}$, 
where
$Q_c^r$ and $Q_c^{\tilde{r}}$ are defined as in (\ref{FE-space-part}) with $\mathcal{T}_h= \mathcal{T}_h^l$ and $\tilde{r} > r$. 
If $U$ and $V$ are spaces of vector-valued functions, 
then intersection has to be understood component-wise with possibly different $r$ in each component.

	\begin{remark}
		The algorithms presented in this section are formulated for FEM \cite{CaOd84,Braess,ArnBofFal2002,Ciarlet:2002:FEM:581834}. 
		 However, we are 
		 not restricted to a particular discretization technique, 
		 but 
		 we must be able 
		 to realize the adaptivity in an appropriate way. 
		 For instance, in isogeometric analysis (IGA) that was originally introduced in 
		 \cite{HughesCottrellBazilevs:2005a}
		 on tensor-product meshes, local mesh refinement is more challenging
		 than in the FEM. Truncated hierarchical B-splines (THB-splines) 
		 are one possible choice to create localized basises 
		 which form a PU, see \cite{GiJuHeTHBsplines2012}.
		 
		 Higher-order B-splines of highest smoothness even on coaerser 
		 meshes can be used to construct enriched 
		 spaces $U_h^{l,(2)}$ and $V_h^{l,(2)}$ that lead to cheap 
		 problems on the enriched spaces, see 
\cite{KleissTomar:2015a,LangerMatculevichRepin:2017a,LangerMatculevichRepin:2017b}
		 for the successful use of this technique in functional-type
		 a posteriori error estimates.
		 
		 \end{remark}

\subsection{Newton's algorithm} 
Newton's algorithm for solving the nonlinear variational 
problem (\ref{problem: discrete primal problem})
belonging to refinement level $l$ 
is given by Algorithm~\ref{newton_algorithm}.
Below we identify $\mathcal{A}(u_h^{l,k})$ with the corresponding vector
with respect to the chosen basis when we compute 
$\Vert \mathcal{A}(u_h^{l,k}) \Vert_{\ell_\infty}$.

\begin{algorithm}[H]
	\caption{Newton's algorithm on level $l$}\label{newton_algorithm}
	\begin{algorithmic}[1]
		\State 
		       Start with some initial guess $u^{l,0}_h \in
                         U_h^l$, set $k=0$,
                       and set $TOL_{Newton}^l > 0$.
		\While{$\Vert \mathcal{A}(u_h^{l,k}) \Vert_{\ell_\infty}> TOL_{Newton}^l$}
		\State Solve for $\delta u^{l,k}_h$, 
		       $$ \mathcal{A}'(u^{l,k}_h)(\delta u^{l,k}_h,v_h)=-\mathcal{A}(u^{l,k}_h)(v_h)  \quad \forall v_h \in 
		       V_h^l.$$
		\State Update : $u^{l,k+1}_h=u^{l,k}_h+\alpha \delta u^{l,k}_h$ for some good choice $\alpha \in (0,1]$.
		\State $k=k+1.$
		\EndWhile
	\end{algorithmic}
\end{algorithm}
	\begin{remark}
		In order to save computational cost we do not rebuild the matrices in every step.
		We rebuild the matrices if $\Vert \mathcal{A}(u_h^{l,k}) \Vert_{\ell_\infty}/\Vert \mathcal{A}(u_h^{l,k-1}) \Vert_{\ell_\infty} > 0.85 $ in Algorithm \ref{newton_algorithm}.
	\end{remark}

\begin{remark}
	Motivated by nested iterations, 
	see, e.g., Section~6 in \cite{BeRa01},
	and the analysis for nonlinear nested iterations 
	as given in Section~9.5 from \cite{MultigridHackusch03a},
	we use  $TOL_{Newton}^1= 10^{-8}\Vert \mathcal{A}(u_h^{1,0}) \Vert_{\ell_\infty}$ and $TOL_{Newton}^l= 10^{-2}\Vert \mathcal{A}(u_h^{l,0}) \Vert_{\ell_\infty}$ for $l > 1$  as stopping criteria.
\end{remark}

\begin{remark}
The parameter $\alpha$ can be obtained 
by means of
a line search procedure. To obtain a good convergence,
 we used  $\alpha=\gamma^L$ with $0<\gamma<1$, where the smallest $L$
 that
fulfills 
 $$\Vert \mathcal{A}(u_h^{l,k}+\alpha \delta u^{l,k}_h ) \Vert_{\ell_\infty}<\text{c}(L,L_{max})\Vert \mathcal{A}(u_h^{l,k} ) \Vert_{\ell_\infty},$$ 
with 
$$c(L,L_{max}):=\begin{cases}
0.8\quad &L=0\\
0.888 \quad &L=1\\
(0.888+0.112\sqrt{\frac{L+1}{L_{max}}}) \quad &L>1
\end{cases},$$
 $L=\{0, 1, 2 \cdots, L_{max}-1\}$ and $L_{max}=200$, is accepted.  
This choice of $\alpha$ was taken heuristically to obtain a
better convergence of the Newton method in the numerical Example
\ref{subsubsection Example 1c}. 
In Algorithm \ref{newton_algorithm}, we choose
$\gamma=0.9$, and in Algorithm \ref{inexat_newton_algorithm_for_multiple_goal_functionals},
 $\gamma=0.85$.
We remark that a standard backtracking line search method also
  works, see, e.g., \cite{ToWi2017}, but the previous exotic choice yields better iteration numbers.
\end{remark}


\subsection{Adaptive Newton algorithms for multiple-goal functionals }

In this section, we describe the key algorithm. The basic structure 
of the algorithm is similar to 
that
presented in \cite{RanVi2013} and \cite{ErnVohral2013}.
Our contribution is the extension to multiple-goal functionals.

	\begin{algorithm}[H]
		\caption{Adaptive Newton algorithm for multiple-goal
                  functionals on level $l$ } \label{inexat_newton_algorithm_for_multiple_goal_functionals}
		\begin{algorithmic}[1]
			\State Start with  some initial guess $u^{l,0}_h \in U_h^l$ and $k=0$.
				\State For $z^{l,0}_h$, solve $$ \mathcal{A}'(u^{l,0}_h)(v_h,z^{l,0}_h)=(J_{\mathfrak{E}}^{(0)})'(u^{l,0}_h)(v_h) \quad \forall v_h \in V_h^l,$$
				with  $(J_{\mathfrak{E}}^{(0)})'$ constructed with $u^{l,(2)}_h$ and $u^{l,0}_h$ as defined in (\ref{ErrorrepresentationFunctionalapprox'}).
			\While{$|\mathcal{A}(u^{l,k}_h)(z^{l,k}_h)|> 10^{-2} \eta_h^{l-1}$}
		\State For $\delta u^{l,k}_h$, solve $$ \mathcal{A}'(u^{l,k}_h)(\delta u^{l,k}_h,v_h)=-\mathcal{A}(u^{l,k}_h)(v_h)  \quad \forall v_h \in V_h^l.$$
		\State Update : $u^{l,k+1}_h=u^{l,k}_h+\alpha \delta u^{l,k}_h$ for some good choice $\alpha \in (0,1]$.
		\State $k=k+1.$
		\State For $z^{l,k}_h$, solve $$ \mathcal{A}'(u^{l,k}_h)(v_h,z^{l,k}_h)=(J_{\mathfrak{E}}^{(k)})'(u^{l,k}_h)(v_h) \quad \forall v_h \in U_h^l,$$
		$\quad$ with $(J_{\mathfrak{E}}^{(k)})'$ constructed with $u^{l,(2)}_h$ and $u^{l,k}_h$ as in (\ref{ErrorrepresentationFunctionalapprox'}).
			\EndWhile
		\end{algorithmic}
	\end{algorithm}



\subsection{The final algorithm} 
Now let us compose the final adaptive algorithm that starts from an initial mesh
$\mathcal{T}_h^1$ and the corresponding finite element spaces 
$V_h^1$,  $U_h^1$, $U_{h}^{1,(2)}$ and $V_{h}^{1,(2)}$, where $U_{h}^{1,(2)}$ and $V_{h}^{1,(2)}$ are the
	enriched finite element spaces as described 
	in Section~\ref{subsec_SpatialDiscretization}.
The refinement procedure produces a sequence of finer and finer 
meshes
$\mathcal{T}_h^l$ with the correponding FE spaces
$V_h^l$,  $U_h^l$, $U_{h}^{l,(2)}$ and $V_{h}^{l,(2)}$ for $l=2,3,\ldots$ .

	\begin{algorithm}[H]
		\caption{The final algorithm }\label{final algorithm}
		\begin{algorithmic}[1]
				\State Start with some initial guess 
                                  $u_{h}^{0,(2)}$,$u_h^{0}$, set $l=1$
                                  and set $TOL_{dis} > 0$.
				\State Solve  (\ref{problem: discrete primal problem}) for  $u_h^{l,(2)}$ using Algorithm \ref{newton_algorithm} with the initial guess $u_h^{l-1,(2)}$ on the discrete space $U_{h}^{l,(2)}$. \label{solve Uh2}
				\State Solve (\ref{problem: discrete primal problem}) and (\ref{Problem: discrete adjoint on small space}) using Algorithm \ref{inexat_newton_algorithm_for_multiple_goal_functionals} with the initial guess $u_h^{l-1}$ on the discrete spaces $U_{h}^l$ and $V_{h}^l$ . \label{final algorithm: solveprimal}
				\State Construct the combined functional $J_{\mathfrak{E}}$ as in (\ref{ErrorrepresentationFunctionalapprox}).
				\State Solve the adjoint problem (\ref{Problem: adjoint}) for $J_{\mathfrak{E}}$ on $V_h^{l,(2)}$. \label{final algorithm: solveadjoint}
				\State Construct the error estimator $\eta_K$  by distributing $\eta_i$ defined in (\ref{eta_i_PU}) to the elements.
				\State Mark elements with some refinement strategy. \label{final algorithm: refinement}
				\State Refine marked elements: $\mathcal{T}_h^l \mapsto\mathcal{T }_h^{l+1}$ and $l=l+1$.
				\State If $|\eta_h| < TOL_{dis}$ stop, else go to \ref{solve Uh2}.
		\end{algorithmic}
	\end{algorithm}

In step~\ref{final algorithm: solveprimal} of Algorithm~\ref{final algorithm}, 
we replaced the estimated error $\eta_h^l$ by  $\eta_h^{l-1}$
in  Algorithm~ \ref{inexat_newton_algorithm_for_multiple_goal_functionals},
because we want to avoid the solution of the adjoint problem on the space $V_h^{l,(2)}$.
	Since the error in the previous estimate might be larger in general, 
        we take $10^{-2} \eta_h^{l-1}$ instead of $10^{-1} \eta_h^{l}$, 
        which was suggested in \cite{RanVi2013}.

	Thus,  $\eta_h^{l-1}$ is not defined on the first 
        level.
        Therefore, we set it to $\eta_h^{0}:=10^{-8}$. This means 
	that we perform
	more iterations on the coarsest level. However, solving on this level is very cheap.

\begin{remark}
	We notice that step \ref{solve Uh2} in Algorithm \ref{final algorithm} is costly, because 
	we have to solve a problem corresponding to an enriched finite element space.
\end{remark}

\begin{remark}
In step \ref{final algorithm: refinement} of Algorithm \ref{final algorithm}, we mark all elements $K'$ where $\eta_{K'} \leq \frac{1}{|\mathcal{T}_h^l|}\sum_{K \in \mathcal{T}_h^l}^{}\eta_K$, where $|\mathcal{T}_h^l|$ denotes the number of elements.
\end{remark}

\begin{remark}
Inspecting Algorithm \ref{final algorithm}, we need solve 
at each refinement level 
four problems: two are solved in  step \ref{final algorithm: solveprimal}, and
one in step \ref{solve Uh2} and \ref{final algorithm: solveadjoint},
respectively.
  On the one hand,
this is costly in comparison to other error estimators, e.g., residual-based, 
where only the primal problem needs to be solved. On the other hand, the 
adjoint solutions yield precise sensitivity measures for accurate measurements 
of the goal functionals. In addition, we control both the discretization
and nonlinear iteration error for multiple goal functionals.
Finally, the proposed approach is nonetheless much cheaper for many goal functionals. 
A naive approach 
(for a discussion in the linear case of multiple goal functionals 
or for
using the primal part of the error estimator only, we refer 
the reader again to \cite{HaHou03,Ha08})
would mean to solve $2N+2$ problems (i.e., $N+1$ for the primal
part). 
\end{remark}

%
\section{Numerical examples}
\label{sec_num_tests}
%
In this section, we perform numerical tests for two nonlinear problems,
where the first problem contains 
two model parameters.
We consider different choices
of these parameters that lead to different levels of difficulty with respect 
to their numerical treatment.

\begin{itemize}
	\item \textbf{Example 1} ($p$-Laplacian):
	\begin{enumerate}[a)]
		\item Smooth solution with homogeneous Dirichlet boundary conditions and right hand side on the unit square  for $p=2$ and $p=4$ with $\varepsilon=1$ as regularization parameter, 
		and an integral evaluation over the whole domain as functional of interest.
		\item Smooth solution with inhomogeneous Dirichlet boundary conditions on the unit square with a disturbed grid and $p=5$ and $p=1.5$ with 
		$\varepsilon=0.5$ 
		and a point evaluation as functional of interest.
		\item Solution with corner singularities and homogeneous Dirichlet boundary conditions on a cheese domain with $p=4$ and $p=1.33$ with a very small regularization parameter $\varepsilon=10^{-10}$, 
		and two nonlinear and two linear functionals of interest.
	\end{enumerate}
	\item \textbf{Example 2} (a quasilinear PDE system): \newline
	 Solution with low regularity on a slit domain with mixed boundary conditions,
	 and one linear and five nonlinear functionals of interest.
\end{itemize}

The implementation is based on the finite element library 
deal.II \cite{dealII84} and the extension of our previous work \cite{EnWi17}.

\subsection{Preliminaries}

The following examples are discretized using globally continuous  isoparametric 
quadrilateral
elements as 
introduced in Section~\ref{subsec_SpatialDiscretization}.
	If not mentioned otherwise, we use 
	$U_h^{(2)}=Q^{r+1}_c \cap U$ and $V_h^{(2)}=Q^{r+1}_c \cap V$
	for the enriched finite element spaces, 
	if $U_h=Q^r_c \cap U$ and $V_h=Q^r_c \cap V$
	is used for the original finite element spaces.
In all numerical experiments we used $r=1$ except in Section \ref{subsubsection: Example1a} Case 1, where the used discretization is given explicitly.
To solve the arising linear systems, we used the sparse direct solver UMFPACK \cite{UMFPACK}. 
The error-weighting function
        $\mathfrak{E}(x,\vec{J}(u_h)):=\sum_{i=1}^{N}\frac{x_i}{|J_i(u_h)|}$ is used to construct $J_\mathfrak{E}$ as in (\ref{ErrorrepresentationFunctionalapprox}). 
In our computations,  we used the finite element function which is $1$ at the nodes which do not
belong to the Dirichlet boundary and fulfills the boundary conditions at the
nodes which belongs to the Dirichlet boundary  as initial guess for
$u_{h}^{0,(2)}$ and $u_h^{0}$. 

To investigate how well our error estimator performs in estimating 
the error, we introduce the effectivity indices for the functional $J$
as follows:
\begin{align}
	I_{eff} &:= \frac{\eta_h}{|J(u)-J(u_h)|},\label{Ieff}\\ 
	I_{effp} &:= \frac{|\rho(\tilde{u})(z_h^{(2)}-z_h)|}{|J(u)-J(u_h)|},\label{Ieffp}\\ 
	I_{effa} &:= \frac{|\rho^*(\tilde{u},\tilde{z})(u_h^{(2)}-u_h)|}{|J(u)-J(u_h)|}, \label{Ieffa}
\end{align}
where $\rho$ is defined by (\ref{Error Estimator: primal}), 
$\rho^*$ as in (\ref{Error Estimator: adjoint}), 
and $\eta_h$ as in (\ref{Error Estimator: discretization inexact}).
We call (\ref{Ieff}) the effectivity index, 
(\ref{Ieffp}) the primal effectivity index, 
and (\ref{Ieffa}) the adjoint effectivity index.
In the first part, we analyze the behavior of our algorithm for the regularized $p$-Laplace equation (\ref{p_laplace_problem}).
In Section~\ref{subsubsection: Example1a}, Case 1, 
we apply our algorithm to the linear problem given in \cite{RiWi15_dwr}, i.e., for $p=2$. 
For Section~\ref{subsubsection: Example1a},  Case 2, we chose $p=4, \varepsilon=1$,
and apply our algorithm to a nonlinear problem,
and compare the refinement evolution for the different error estimators  
$|\rho(\tilde{u})(z_h^{(2)}-z_h)|,$ $|\rho^*(\tilde{u},\tilde{z})(u_h^{(2)}-u_h)|$ and $\eta_h$.

In Section \ref{subsubsection: Example 1b}, we solve the $p$-Laplace equation for $p=5$ and $p=1.5$ on a disturbed grid, 
aiming for a point evaluation.  
We compare the results of our algorithm 
with the results of global refinement and also to the different error estimators. 
The examples in Section \ref{subsubsection Example 1c} consider several reentrant corners, several nonlinear functionals, and a very small regularization parameter $\varepsilon =10^{-10 }$.
In Section~\ref{QuasilinearvectorPDE}, 
we investigate the behavior of our algorithm for a quasilinear PDE system.

\subsection{Example 1: $p$-Laplace}
\label{subsection: Example1}
Let $\varepsilon >0$ and $p \in \mathbb{R}$ with $p > 1$, and let $\Omega$ be a bounded Lipschitz domain in $\mathbb{R}^2$. 
We again consider the Dirichlet problem for $p$-Laplace equation, cf. Section~\ref{Discretization}, 
but now with inhomogeneous Dirichlet boundary conditions: Find $u$ such that:
\begin{align}\label{p_laplace_problem}
	\begin{aligned}
	-\text{div}((\varepsilon^2 + |{\nabla u}|^2)^{\frac{p-2}{2}}\nabla u) &=f \quad  \forall  \text{in }\Omega, \\
	u &= g \quad \text{on } \partial \Omega .
	\end{aligned}
\end{align}

The Fr\'echet derivative $\mathcal{A}'(u)$ at $u$ of the nonlinear operator $\mathcal{A}$ 
corresponding to the $p$-Laplace problem problem~\ref{p_laplace_problem}, cf. also Section~\ref{Discretization},
is given by the variational identity

%
%
\begin{align*}
			\mathcal{A}'(u)(q,v) =& \langle{(\varepsilon^2+\Vert{\nabla u}\Vert_{\ell_2}^2)^{\frac{p-2}{2}}\nabla q,\nabla v}\rangle\\
			+& \langle{(p-2)(\varepsilon^2+\Vert{\nabla u}\Vert_{\ell_2}^2)^{\frac{p-4}{2}}(\nabla u, \nabla q)_{\ell_2} \nabla u,\nabla v}\rangle \quad  \forall q,v \in W^{1,p}_0(\Omega).
		\end{align*}
%
		


\subsubsection{Regular cases}
\label{subsubsection: Example1a}
Here we consider a problem with a smooth solution and a smooth adjoint solution.
\newline

\textbf{Case 1 ($p=2$, i.e. Poisson problem):}
This is the same example as Example 1 in \cite{RiWi15_dwr}. 
In this example,
the data
are given by $\Omega = (0,1) \times (0,1)$, $f=1$ and $g=0$.
We are interested in the following functional evaluation:
\begin{align*}
	J_1(u)&:=\int_{\Omega}u(x)\,d x \approx 0.03514425375\pm 10^{-10} .
\end{align*}
This reference value was taken from \cite{RiWi15_dwr}. 
If we compare our results in Table \ref{table: p=2 RiWiQ3Q6} with the results in \cite{RiWi15_dwr},
 then we observe that they are quite similar.
  The estimated error $\eta_h$  is almost the same, and the DOFs 
  exactly coincide with the 
 DOFs
  in \cite{RiWi15_dwr}. 
  However, using
  just one polynomial degree higher for $U_h^{(2)}$, 
  we obtain similar results with less computational cost 
  as is shown in Table \ref{table: p=2 RiWiQ3Q4}.

\begin{figure}[H]
	\centering
\begin{tabular}{|c|r|c|c|l|l|l|}
	\hline
	$l$ & \text{DOFs} & $|J(u)-J(u_h)|$ & $\eta_h$ & $I_{eff}$ & $I_{effp}$ & $I_{effa}$ \\ \hline
	1   & 169    & 8.51E-07        & 8.47E-07 & 1.00         & 1.00       & 1.00          \\ \hline
	2   & 317    & 1.12E-07        & 1.37E-07 & 1.23      & 1.23       & 1.23       \\ \hline
	3   & 937    & 5.57E-09        & 7.55E-09 & 1.35      & 1.35       & 1.36       \\ \hline
	4   & 1 813   & 1.15E-09        & 1.41E-09 & 1.22      & 1.23       & 1.22       \\ \hline
	5   & 3 877   & 6.48E-11        & 8.05E-11 & 1.24      & 1.24       & 1.24       \\ \hline
	6   & 7 057   & 2.81E-11        & 2.07E-11 & 0.74      & 0.74       & 0.74       \\ \hline
\end{tabular}
	\captionof{table}{Section \ref{subsubsection: Example1a}, Case
          1. Display of exact error $|J(u)-J(u_h)|$ , estimated error $\eta_h$, and effectivity indices   for  $U_h=Q^3_c$ and $U_h^{(2)}=Q^6_c$.}
	\label{table: p=2 RiWiQ3Q6}
\end{figure} 
\begin{figure}[H]
	\centering
\begin{tabular}{|c|r|c|c|l|l|l|}
	\hline
	$l$ & \text{DOFs} & $|J(u)-J(u_h)|$ & $\eta_h$ & $I_{eff}$ & $I_{effp}$ & $I_{effa}$ \\ \hline
		1   & 169    & 8.51E-07    & 7.72E-07   & 0.91      & 0.91       & 0.91       \\ \hline
		2   & 317    & 1.12E-07    & 1.32E-07   & 1.18      & 1.18       & 1.18       \\ \hline
		3   & 789    & 5.12E-08    & 5.33E-08   & 1.04      & 1.04       & 1.04       \\ \hline
		4   & 1 301   & 4.11E-09    & 4.06E-09   & 0.99      & 0.99       & 0.99       \\ \hline
		5   & 1 977   & 1.06E-09    & 1.58E-09   & 1.49      & 1.49       & 1.5        \\ \hline
		6   & 4 149   & 6.56E-11    & 7.91E-11   & 1.2       & 1.2        & 1.21       \\ \hline
		7   & 7 273   & 2.65E-11    & 2.11E-11   & 0.8       & 0.8        & 0.8        \\ \hline
	\end{tabular}
	\captionof{table}{Section \ref{subsubsection: Example1a}, Case
          1. Display of exact error $|J(u)-J(u_h)|$ , estimated error $\eta_h$, and effectivity indices   for  $U_h=Q^3_c$ and $U_h^{(2)}=Q^4_c$.}
	\label{table: p=2 RiWiQ3Q4}
\end{figure}

\noindent
\textbf{Case 2 ($p=4,$ $\varepsilon =1$)}:
We use the same 
	setting
as above, but with $p=4$ and $\varepsilon=1$.
The finite element spaces are given by $U_h=Q^1_c$ and  $U_h^{(2)}=Q^2_c$ .
We are interested in the following functional evaluation
\begin{align*}
J_1(u)&:=\int_{\Omega}u(x) \,d x \approx  0.033553988572 \pm 10^{-6}.
\end{align*}
This reference value was computed on a fine grid with $263\,169$ $ \text{DOFs}$ ($9$ global refinement steps).
In this example, we compare the refinements for different error estimators.

\begin{figure}[H]
	\centering
		\begin{tabular}{|c|r|c|l|l|l|}
			\hline
			$l$ & \text{DOFs} & $|J(u)-J(u_h)|$ & $I_{eff}$ & $I_{effp}$ & $I_{effa}$ \\ \hline
		1  & 9      & 1.08E-02    & 0.98      & 0.92       & 1.05       \\ \hline
		2  & 25     & 2.82E-03    & 0.99      & 0.92       & 1.07       \\ \hline
		3  & 81     & 7.11E-04    & 1.00         & 0.92       & 1.08       \\ \hline
		4  & 289    & 1.78E-04    & 1.00       & 0.92       & 1.08       \\ \hline
		5  & 1 089   & 4.44E-05    & 1.00        & 0.92       & 1.09       \\ \hline
		6  & 4 193   & 1.15E-05    & 1.07      & 0.98       & 1.15       \\ \hline
		7  & 6 545   & 9.45E-06    & 1.08      & 0.99       & 1.17       \\ \hline
		8  & 16 769  & 2.61E-06    & 1.07      & 0.98       & 1.16       \\ \hline
		9  & 36 009  & 1.75E-06    & 1.13      & 1.04       & 1.22       \\ \hline
	\end{tabular}
	\captionof{table}{Section \ref{subsubsection: Example1a}, Case 2. Refinement 
	is only based on
	the primal part of the error estimator $\eta_h$.}\label{tableP4Primal}
\end{figure}

\begin{figure}[H]
	\centering
	\begin{tabular}{|c|r|c|l|l|l|}
		\hline
		$l$ & \text{DOFs} & $|J(u)-J(u_h)|$ & $I_{eff}$ & $I_{effp}$ & $I_{effa}$ \\ \hline
1   & 9      & 1.08E-02    & 0.98      & 0.92       & 1.05       \\ \hline
2   & 25     & 2.82E-03    & 0.99      & 0.92       & 1.07       \\ \hline
3   & 81     & 7.11E-04    & 1.00         & 0.92       & 1.08       \\ \hline
4   & 289    & 1.78E-04    & 1.00         & 0.92       & 1.08       \\ \hline
5   & 913    & 7.54E-05    & 1.15      & 1.09       & 1.21       \\ \hline
6   & 1 545   & 4.08E-05    & 1.09      & 1 .00         & 1.18       \\ \hline
7   & 4 225   & 1.10E-05    & 1.02      & 0.93       & 1.10      \\ \hline
8   & 10 513  & 6.56E-06    & 1.10       & 1.04       & 1.16       \\ \hline
9   & 20 649  & 2.48E-06    & 1.12      & 1.03       & 1.22       \\ \hline
\end{tabular}
	\captionof{table}{Section \ref{subsubsection: Example1a}, Case 2. Refinement 
	is only based on
	the adjoint part of the error estimator $\eta_h$.}\label{tableP4Adjoint}
\end{figure}

\begin{figure}[H]
	\centering
	\begin{tabular}{|c|r|c|l|l|l|}
		\hline
		$l$ & \text{DOFs} & $|J(u)-J(u_h)|$ & $I_{eff}$ & $I_{effp}$ & $I_{effa}$ \\ \hline
		1   & 9      & 1.08E-02    & 0.98      & 0.92       & 1.05       \\ \hline
		2   & 25     & 2.82E-03    & 0.99      & 0.92       & 1.07       \\ \hline
		3   & 81     & 7.11E-04    & 1.00      & 0.92       & 1.08       \\ \hline
		4   & 289    & 1.78E-04    & 1.00      & 0.92       & 1.08       \\ \hline
		5   & 1 089   & 4.44E-05    & 1.00      & 0.92       & 1.09       \\ \hline
		6   & 3 137   & 2.26E-05    & 1.14      & 1.07       & 1.20       \\ \hline
		7   & 5 833   & 1.02E-05    & 1.10      & 1.01       & 1.19       \\ \hline
		8   & 16 641  & 2.61E-06    & 1.07      & 0.98       & 1.15       \\ \hline
		9   & 38 993  & 1.59E-06    & 1.15      & 1.08       & 1.21       \\ \hline
	\end{tabular}
	\captionof{table}{Section \ref{subsubsection: Example1a}, Case 2. 
          Refinement for the error estimator $\eta_h$.}
        \label{tableP4PrimalAdjoint}
\end{figure}

\newpage
In this example, 
we obtain quite good effectivity indices for the refinements 
based on the 
the primal part of the error estimator, cf. Table \ref{tableP4Primal}, 
the adjoint part of the error estimator, cf. Table \ref{tableP4Adjoint}, 
and the full error estimator $\eta_h$,  cf. Table \ref{tableP4PrimalAdjoint}. 
Furthermore, the convergence rates are also very similar. 
One might conclude that the adjoint error estimator is not required to obtain good effectivity indices. 
However, in the following examples, we observe that this is not the case
for less regular solutions and adjoint solutions.


\subsubsection{Semiregular cases}
\label{subsubsection: Example 1b}
As in the regular cases, we consider a smooth solution, but a low regular adjoint solution.
This example is motivated by an example in \cite{ToWi2017}. We choose the right-hand side and 
the boundary conditions such that exact solution is given by $ u(x,y)=\text{sin}(6x+6y).$
The computation was done on the unit square $\Omega = (0,1) \times (0,1)$ on a slightly
perturbed mesh (generated with the deal.II \cite{BangerthHartmannKanschat2007,dealII84} command 
\verb|distort_random| with $0.2$ on a $4$ times globally refined grid unit square). The resulting mesh is shown in Figure \ref{Example 1c: initial mesh}. The functional of interest is $J(u)=u(0.6,0.6)$.
We consider the following two cases:
\begin{itemize}
	\item \textbf{Case 1 ($p=5,\varepsilon = 0.5$)},
	\item \textbf{Case 2 ($p=1.5,\varepsilon =0.5$)}.
\end{itemize}
In both cases, the method also worked for the perturbed meshes. 
For the case $p=5$ and $\varepsilon=0.5$, we 
observe from Figure~\ref{Example1b: adjoint solution} that the   adjoint solution almost vanishes in the set outside the domain which is covered by the condition 
$\nabla u =0$, and contains the point $(0.6,0.6)$. 
This was not observed in Case\,2. 
However, the condition $\nabla u=0$ seems to be important in both cases. 
The adaptively refined meshes shown in Figure \ref{Example1b: primal solution} and Figure \ref{Example1b: 2:refined+marked} have more refinement levels in these regions. 
In Figure \ref{Example1bCase1gnuplot} and Figure\,\ref{Example1bCase2gnuplot},
we observe 
that we get the same convergence rate 
as in the case of
uniform refinement. Since the solution is smooth, a global refinement already attains the optimal convergence rate. 
However, we 
get a reduction of the number of DOFs that are needed to obtain the same error.
%
Furthermore, 
we monitor that the effectivity index is better on finer meshes.
The reason might be the neglected remainder term from Theorem \ref{Theorem: Error Representation}. 
From Table \ref{Example1bCase1Ieff} and Table \ref{Example1bCase2Ieff}, we conclude that 
this does not necessarily  hold
for the primal and the adjoint error estimator separately. 
\begin{figure}[H]
		\begin{minipage}[t]{0.45\linewidth}
\hspace{-0.15 \linewidth}
			\includegraphics[scale = 0.30]{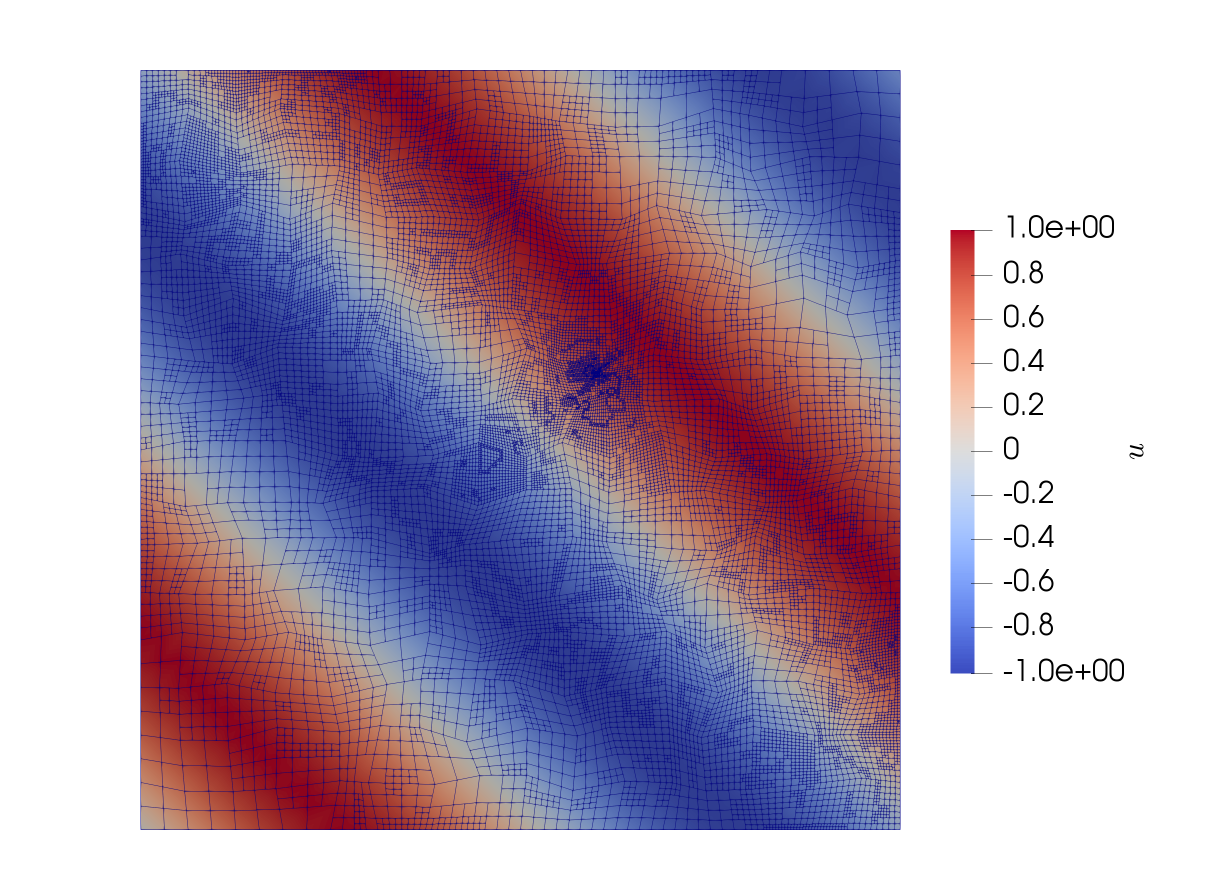}
			\caption{Section \ref{subsubsection: Example 1b}, Case 1. Primal solution and mesh after six adaptive refinements.} 
			\label{Example1b: primal solution}
		\end{minipage}
							\hspace{0.1 \linewidth}
				\begin{minipage}[t]{0.45\linewidth}
\hspace{-0.15 \linewidth}
					\includegraphics[scale = 0.30]{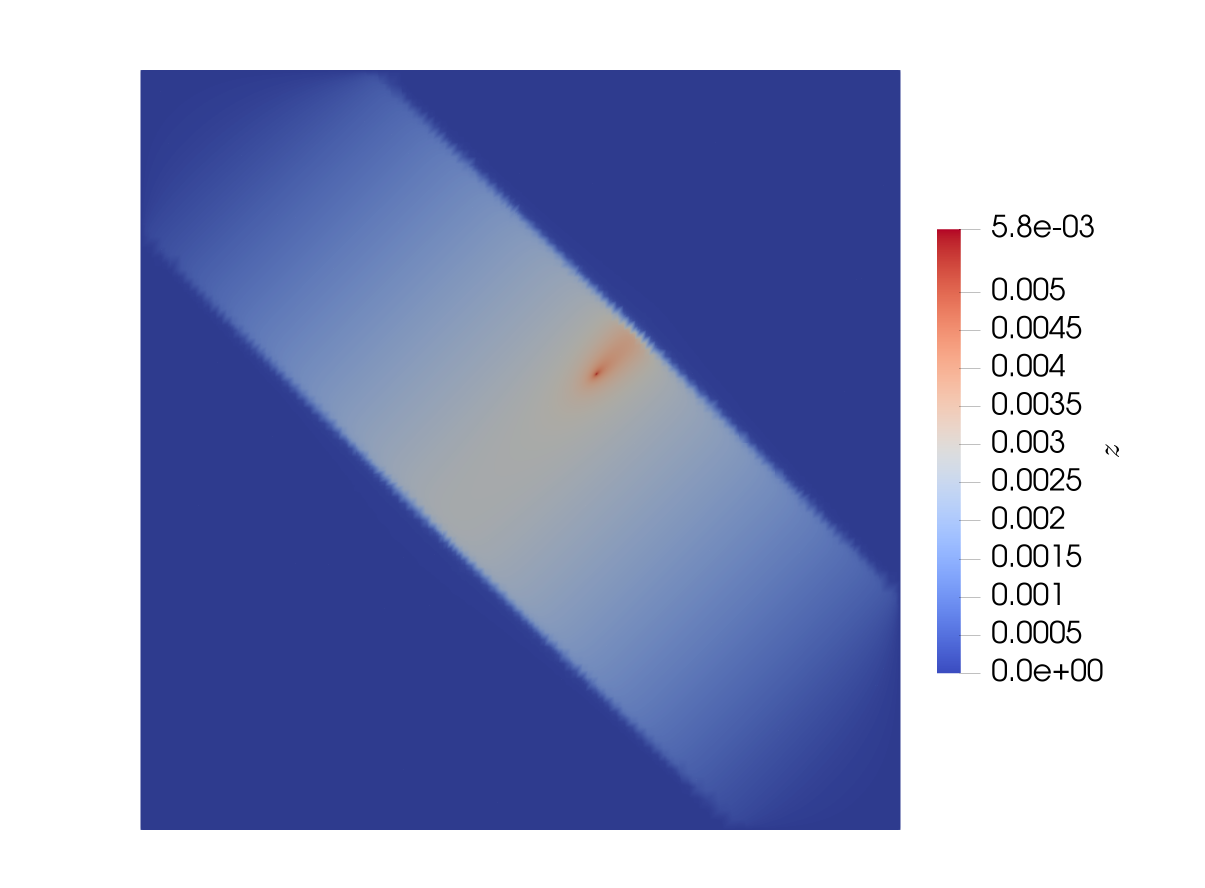}
					\caption{Section \ref{subsubsection: Example 1b}, Case 1. Adjoint solution on the mesh as given in Figure \ref{Example1b: primal solution}.}\label{Example1b: adjoint solution}
				\end{minipage}

\end{figure}
\begin{figure}[H]
	
	\begin{minipage}[t]{0.45\linewidth}
		\centering
		\ifMAKEPICS
		\begin{gnuplot}[terminal=epslatex]
			set output "Figures/Example1bCase1gnuplottex"
set datafile separator "|"
set logscale
set grid ytics lc rgb "#bbbbbb" lw 1 lt 0
set grid xtics lc rgb "#bbbbbb" lw 1 lt 0
set xlabel '\text{DOFs}'
set ylabel '$|u(0.6,0.6)-u_h(0.6,0.6)|$'
set format '
					plot [180:2000000] '< sqlite3 Data/computationdata/disturbedunitsquare/p=5/datap=5.db "SELECT DISTINCT DOFS_primal_pf, Exact_Error from data"' u 1:2 w lp lw 2 title 'Error (adaptive)',\
					'< sqlite3 Data/computationdata/disturbedunitsquare/p=5/datap=5.db "SELECT DISTINCT  DOFS_primal_pf, Estimated_Error from data"' u 1:2 w lp lw 2 title '$\eta_h$',\
					'< sqlite3 Data/computationdata/disturbedunitsquare/p=5/datap=5l.db "SELECT DISTINCT DOFS_primal_pf, Exact_Error from data_global"' u 1:2 w lp lw 2 title 'Error (uniform)',\
					1/x lw 4 dashtype 2 title '$\mathcal{O}(\text{DOFs}^{-1})$'\
					#'< sqlite3 Data/computationdata/disturbedunitsquare/p=5/datap=5.db "SELECT DISTINCT  DOFS_primal_pf, Estimated_Error_primal from data"' u 1:2 w lp lw 2 title 'Estimated Error(primal)',\
					#'< sqlite3 Data/computationdata/disturbedunitsquare/p=5/datap=5.db "SELECT DISTINCT  DOFS_primal_pf, Estimated_Error_adjoint from data"' u 1:2 w lp lw 2 title 'Estimated(adjoint)',\
		\end{gnuplot}
		\fi
		\scalebox{0.65}{\input{Example1bCase1gnuplottex}}
		\caption{Section \ref{subsubsection: Example 1b}, Case 1. Error vs \text{DOFs} for $p=5$ and $\varepsilon = 0.5$.} \label{Example1bCase1gnuplot}
	\end{minipage}
	\hspace{0.1 \linewidth}
	\begin{minipage}[t]{0.45\linewidth}
		\ifMAKEPICS
			\centering
		\begin{gnuplot}[terminal=epslatex]
			set output "Figures/Example1bCase2gnuplottex"
				set datafile separator "|"
				set logscale
				set grid ytics lc rgb "#bbbbbb" lw 1 lt 0
				set grid xtics lc rgb "#bbbbbb" lw 1 lt 0
				set format '
				set xlabel '\text{DOFs}'
				set ylabel '$|u(0.6,0.6)-u_h(0.6,0.6)|$'
	plot [180:2000000]  '< sqlite3 Data/computationdata/disturbedunitsquare/p=1.5/datap15.db "SELECT DISTINCT DOFS_primal_pf, Exact_Error from data"' u 1:2 w lp lw 2 title 'Error (adaptive)',\
	'< sqlite3 Data/computationdata/disturbedunitsquare/p=1.5/datap15.db "SELECT DISTINCT  DOFS_primal_pf, Estimated_Error from data"' u 1:2 w lp lw 2 title '$\eta_h$',\
	'< sqlite3 Data/computationdata/disturbedunitsquare/p=1.5/datap15.db "SELECT DISTINCT DOFS_primal_pf, Exact_Error from data_global"' u 1:2 w lp lw 2 title 'Error (uniform)',\
	1/x lw 4 dashtype 2 title '$\mathcal{O}(\text{DOFs}^{-1})$'\
				#'< sqlite3 Data/computationdata/disturbedunitsquare/p=1.5/datap15.db "SELECT DISTINCT  DOFS_primal_pf, Estimated_Error_primal from data"' u 1:2 w lp lw 2 title 'Estimated Error(primal)',\
				#'< sqlite3 Data/computationdata/disturbedunitsquare/p=5/datap=5.db "SELECT DISTINCT  DOFS_primal_pf, Estimated_Error_adjoint from data"' u 1:2 w lp lw 2 title 'Estimated(adjoint)',\
			\end{gnuplot}
			\fi
				\scalebox{0.65}{\input{Example1bCase2gnuplottex}}
			\caption{Section \ref{subsubsection: Example 1b}, Case 2. Error vs DOFs for $p=1.5$ and $\varepsilon = 0.5$.}\label{Example1bCase2gnuplot}
	\end{minipage}
%
	
\end{figure}
\begin{figure}[H]

		\begin{minipage}[t]{0.45\linewidth}
				\tiny				
					\begin{tabular}{|r|r|c|l|l|l|}
						\hline
						$l$ & \text{DOFs}  & $|J(u)-J(u_h)|$& $I_{eff}$ & $I_{effp}$ & $I_{effa}$ \\ \hline
						1  & 289     & 4.24E-03    & 0.48      & 0.60       & 1.56       \\ \hline
						2  & 599     & 5.23E-03    & 0.80      & 0.63       & 0.98       \\ \hline
						3  & 1 095    & 7.72E-04    & 0.16      & 0.02       & 0.34       \\ \hline
						4  & 2 418    & 8.52E-05    & 1.81      & 2.58       & 1.03       \\ \hline
						5  & 4 918    & 1.28E-05    & 4.92      & 3.35       & 6.49       \\ \hline
						6  & 10 112   & 2.64E-05    & 2.03      & 1.83       & 2.22       \\ \hline
						7  & 20 068   & 3.46E-06    & 5.59      & 10.33      & 0.86       \\ \hline
						8  & 40 302   & 1.02E-05    & 1.66      & 2.16       & 1.16       \\ \hline
						9  & 79 468   & 4.45E-06    & 1.51      & 1.60       & 1.43       \\ \hline
						10  & 157 272  & 2.68E-06    & 1.62      & 1.68       & 1.55       \\ \hline
						11 & 305 901  & 1.36E-06    & 1.32      & 1.62       & 1.01       \\ \hline
						12 & 602 720  & 8.52E-07    & 1.29      & 1.46       & 1.12       \\ \hline
						13 & 1 157 353 & 3.40E-07    & 1.28      & 1.55       & 1.01       \\ \hline
					\end{tabular}
	\captionof{table}{Section \ref{subsubsection: Example 1b}, Case 1. Effectivity indices  for $p=5$ and $\varepsilon=0.5$.}\label{Example1bCase1Ieff}
		\end{minipage}
		\hspace{0.1\linewidth}
				\begin{minipage}[t]{0.45\linewidth}
					\tiny				
						\begin{tabular}{|r|r|c|l|l|l|}
							\hline
							$l$ & \text{DOFs} & $|J(u)-J(u_h)|$ & $I_{eff}$ & $I_{effp}$ & $I_{effa}$ \\ \hline
							1  & 289    & 2.07E-02    & 0.61      & 0.47       & 0.75       \\ \hline
							2  & 503    & 5.55E-03    & 0.72      & 0.89       & 0.55       \\ \hline
							3  & 994    & 2.88E-03    & 0.89      & 1.30       & 0.49       \\ \hline
							4  & 2 090   & 8.55E-04    & 1.23      & 1.50       & 0.96       \\ \hline
							5  & 4 233   & 4.34E-04    & 1.45      & 1.90       & 1.00       \\ \hline
							6  & 8 667   & 1.42E-04    & 1.34      & 1.88       & 0.80       \\ \hline
							7  & 17 276  & 8.14E-05    & 1.40      & 2.71       & 0.09       \\ \hline
							8  & 34 846  & 3.54E-05    & 1.28      & 1.75       & 0.80       \\ \hline
							9  & 68 765  & 1.64E-05    & 1.36      & 2.58       & 0.14       \\ \hline
							10  & 136 267 & 8.59E-06    & 1.29      & 2.07       & 0.51       \\ \hline
							11 & 263 508 & 4.30E-06    & 1.19      & 2.20       & 0.18       \\ \hline
							12 & 514 223 & 2.18E-06    & 1.22      & 1.99       & 0.44       \\ \hline
							13 & 988 042 & 1.01E-06    & 1.22      & 2.20       & 0.24       \\ \hline
						\end{tabular}
					\captionof{table}{Section
                                          \ref{subsubsection: Example 1b}, 
                                          Case 2. Effectivity indices  for $p=1.5$
					and $\varepsilon=0.5$.} \label{Example1bCase2Ieff}
				\end{minipage}
\end{figure}
\begin{figure}[H]
	
	\begin{minipage}[t]{0.45\linewidth}
	\hspace{-0.15 \linewidth}			
		\includegraphics[scale = 0.22]{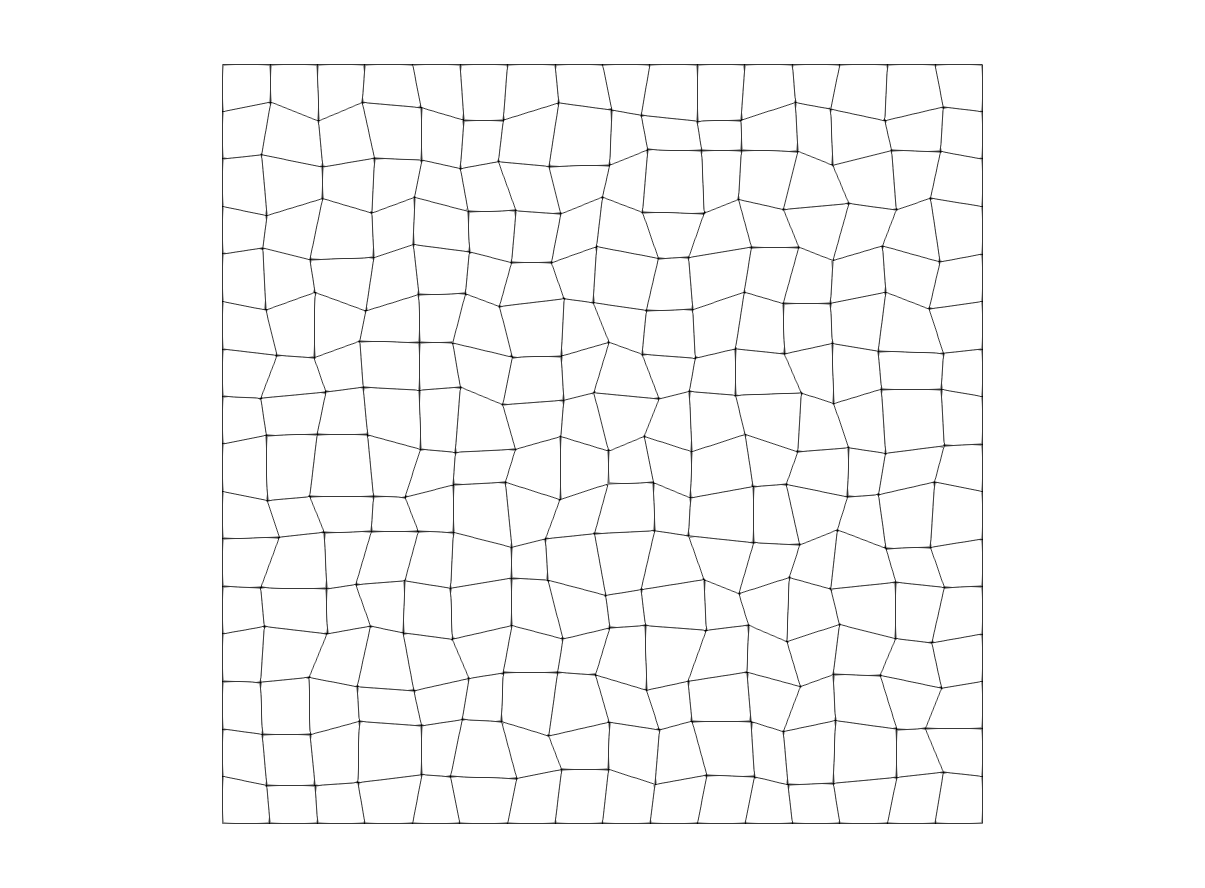}
		\caption{Disturbed initial mesh for Case 1 and Case 2 of Section \ref{subsubsection: Example 1b}.}\label{Example 1c: initial mesh}
	\end{minipage}
	\hspace{0.1\linewidth}
	\begin{minipage}[t]{0.45\linewidth}
		\hspace{-0.15 \linewidth}
		\includegraphics[scale = 0.22]{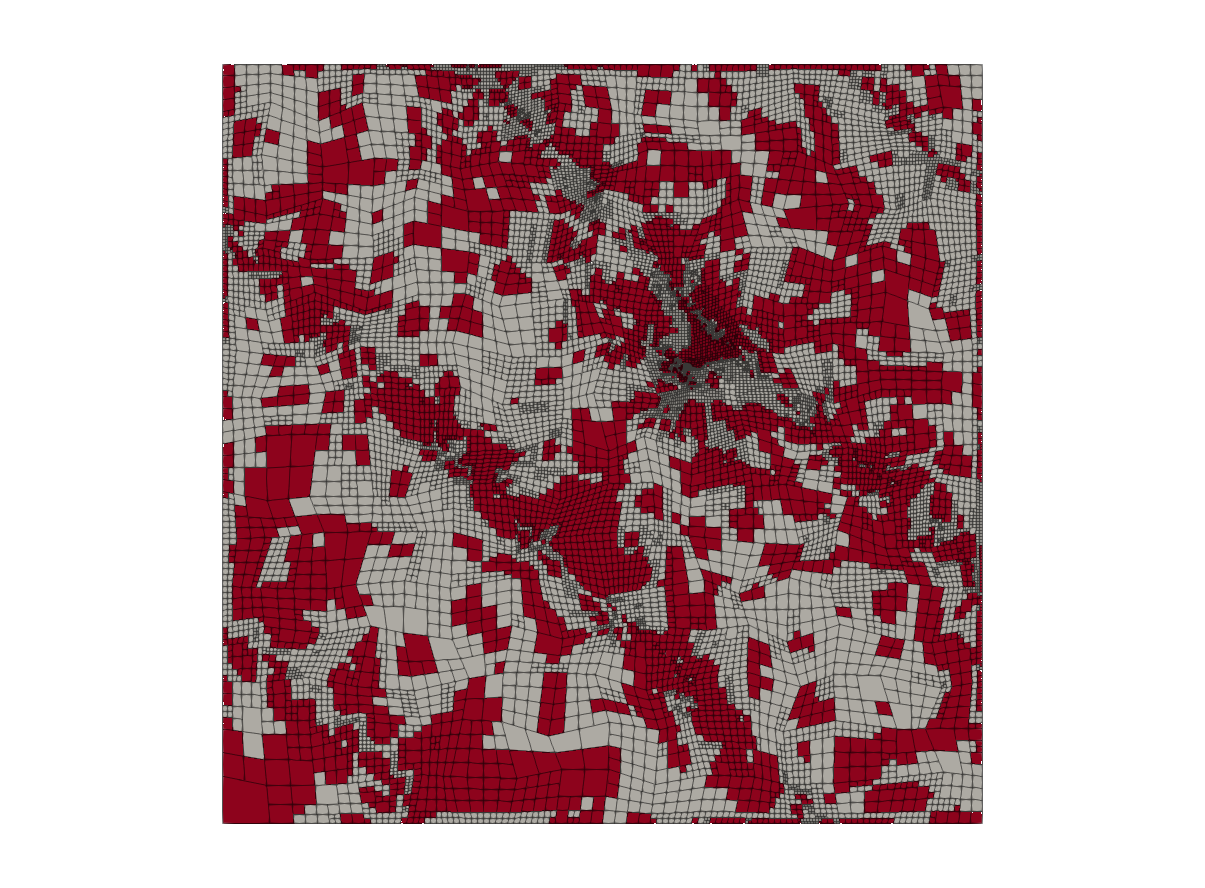}
		\caption{Marked elements (in red)  at refinement level $l= 7$ for Case 2 of Section \ref{subsubsection: Example 1b}.} \label{Example1b: 2:refined+marked}
	\end{minipage}
\end{figure}

 \subsubsection{Low regularity cases} 
\label{subsubsection Example 1c}
As in Section \ref{subsubsection: Example1a}, we consider homogeneous Dirichlet conditions and $f=1$ as right-hand side for the $p$-Laplace equation (\ref{p_laplace_problem}).
However, here both the solution and adjoint solution have low regularity.
 The initial mesh is given as in Figure \ref{initcheese}, which was
 constructed 
using the deal.II \cite{BangerthHartmannKanschat2007,dealII84} command cheese. 
With this data, we have singularities on each of the reentrant corners.  
Furthermore,  in this example, we chose the regularization parameter
$\varepsilon$  to be $10^{-10}$, which makes the problem very ill-conditioned
(in fact it is practically the original $p$-Laplace problem) where $\nabla u=0$, but it is very close to the unregularized $p$-Laplace problem as in \cite{Lind2006} and\cite{Glowinski1975}. 
We are interested in the following four goal functionals:
\begin{align*}
	J_1(u):= &(1+u(2.9,2.1))(1+u(2.1,2.9)),\\
	J_2(u):= &\left(\int_{\Omega} u(x,y)-u(2.5,2.5)\,d(x,y)\right)^2,\\	
	J_3(u):= &\int_{(2,3)\times(2,3)} u(x,y)\,d(x,y),\\
	J_4(u):= &u(0.6,0.6).
\end{align*}
These functionals will be combined 
to $J_{\mathfrak{E}}$ as formulated in (\ref{ErrorrepresentationFunctionalapprox}).\\
\noindent
\textbf{Case 1 ($p=4$, $\varepsilon=10^{-10}$):} 
First we consider a case where $p>2$. 
The following values, which were computed on a fine grid (8 global
refinements, $Q_c^2$ elements, 22\,038\,525 \text{DOFs}) on the cluster RADON1\footnote{\url{https://www.ricam.oeaw.ac.at/hpc/overview/}}, are used to compute the reference values:\\
\begin{minipage}[t]{0.5\linewidth}
	\begin{align*}
		\int_{\Omega} u(x,y)\, d(x,y)\approx & 4.1285036414 \pm 4\times10^{-5},\\
		\int_{(2,3)\times(2,3)} u(x,y)\,d(x,y)\approx & 0.31999986649\pm 10^{-5},\\
		u(2.9,2.1)\approx & 0.16071095234\pm 10^{-5},
	\end{align*}
\end{minipage}
\begin{minipage}[t]{0.5\linewidth}
	\begin{align*}
		u(2.9,2.1)\approx & 0.16071095234 \pm 10^{-5},\\
		u(0.6,0.6)\approx & 0.35554352679\pm 2\times 10^{-6},\\
		u(2.5,2.5)\approx & 0. 49244705234\pm 4\times10^{-6}.
	\end{align*}
\end{minipage}
\vspace*{0.5cm}
Considering the accuracy of the functional evaluations above, 
we observe that the relative errors in the functionals $J_1$, $J_2$, $J_3$ and  $J_4$ are less than $5\times  10^{-5}$.
Our algorithm yields
the results shown in Table \ref{Example 1b:
  errortable}. 
In Figure \ref{Example1c: Case 1 gnuplot2}, we can see that the absolute error in the error functional $J_\mathfrak{E}$ bounds the relative errors of the functionals $J_1$, $J_2$, $J_3$ and  $J_4$.
Furthermore, we  observe that $J_2$ is the dominating functional and $J_1$ is
the one with the smallest error on most refinement levels.
Therefore, we compare the convergence of this functionals in Figure
\ref{Example1c: Case 1 gnuplot}. 
For uniform refinement, we obtain an error behavior of approximately
$\mathcal{O}(\text{DOFs}^{-\frac{3}{4}})$ for $J_2$ 
and $\mathcal{O}(\text{DOFs}^{-\frac{3}{5}})$ for $J_1$, whereas we obtain
excellent 
convergence rates of 
$\mathcal{O}(\text{DOFs}^{-1})$ for both functionals using our refinement algorithm. 
We are not aware of a full convergence analysis on adaptive meshes for
  pointwise estimates for the $p$-Laplacian, but mention 
two related studies \cite{CaKlo03} for $p>2$ showing a posteriori 
estimates for the $W^{1,p}$ norm and \cite{Breit2017} with 
pointwise a priori estimates for the $p$-Laplacian.
The bad convergence of $J_2$ might result from the fact that the point
$(2.5,2.5)$ 
is the intersection of two lines, where the problem is ill-conditioned, and
also leads 
to a kink in the solution at this point (see Figure \ref{Example 1c: p=4
  solution}). 
This kink is not visible 
in the case $p=1.33$ (see Figure \ref{Example 1c: p=1.33
  solution}).  
Comparing the number of Newton steps in Table \ref{Example1c_inexact_newton}
  and Table \ref{Example1c_exact_newton}, 
we observe that the number of Newton steps  is less than for Algorithm
\ref{inexat_newton_algorithm_for_multiple_goal_functionals}. 
However, the additional computational cost has to be considered, 
but we face 
a problem with nonlinear functionals, 
several reentrant corners and a very small regularization parameter
$\varepsilon = 10^{-10}$. 
Furthermore, these tables also suggest that we should compute 
both the primal and the adjoint error estimator to obtain a better approximation of the error.

\begin{table}[H]
	\tiny
	\centering
	\begin{tabular}{|r|r|r|c|c|c|c|}
		\hline
		$l$& \text{DOFs} & $I_{eff}$ & $\left|\frac{J_1(u)-J_1(u_h)}{J_1(u)}\right|$ & $\left|\frac{J_2(u)-J_2(u_h)}{J_2(u)}\right|$& $\left|\frac{J_3(u)-J_3(u_h)}{J_3(u)}\right|$& $\left|\frac{J_4(u)-J_4(u_h)}{J_4(u)}\right|$ \\ \hline 
		1   & 117    & 0.63      & {\color{black} 5.05E-02} & {\color{black} 3.02E-01 }& 1.10E-01 & 1.17E-01 \\ \hline
		2   & 161    & 0.53      & {\color{black} 1.53E-02} & 5.09E-02 & 4.94E-02 & {\color{black} 1.17E-01} \\ \hline
		3   & 290    & 0.84      & {\color{black} 8.25E-03} & 4.41E-02 & 2.14E-02 & {\color{black} 1.09E-01} \\ \hline
		4   & 447    & 0.81      & {\color{black} 4.86E-03} & {\color{black} 5.07E-02} & 1.53E-02 & 1.51E-02 \\ \hline
		5   & 791    & 0.96      & {\color{black} 2.09E-03} & {\color{black} 3.26E-02} & 1.12E-02 & 8.82E-03 \\ \hline
		6   & 1 331   & 1.14      & {\color{black} 1.37E-03} &{\color{black}  1.69E-02} & 8.44E-03 & 2.40E-03 \\ \hline
		7   & 2 541   & 1.92      & 1.65E-03 & 3.37E-03 & {\color{black} 4.38E-03} & {\color{black} 1.21E-03 }\\ \hline
		8   & 4 582   & 1.38      & {\color{black} 6.56E-04} &{\color{black}  3.78E-03} & 2.43E-03 & 8.57E-04 \\ \hline
		9   & 7 378   & 1.64      & 3.14E-04 & {\color{black} 2.52E-03} & 1.05E-03 &{\color{black} 2.06E-04} \\ \hline
		10   & 11 772  & 1.51      & {\color{black} 2.72E-04} &{\color{black}  1.73E-03} & 8.83E-04 & 3.51E-04 \\ \hline
		11  & 20 443  & 1.87      & 9.65E-05 &{\color{black}  5.65E-05} &{\color{black}  5.24E-04}& 5.84E-05 \\ \hline
		12  & 37 747  & 1.87      & {\color{black} 6.09E-05} &{\color{black}  3.05E-04} & 2.17E-04 & 1.20E-04 \\ \hline
		13  & 64 316  & 1.63      & {\color{black} 2.80E-05} & 1.30E-04 & {\color{black} 1.41E-04 }& 4.25E-05 \\ \hline
		14  & 104 832 & 1.44      & {\color{black} 1.04E-05} & {\color{black} 1.39E-04} & 7.18E-05 & 2.02E-05 \\ \hline
	\end{tabular}
		\caption{Section \ref{subsubsection Example 1c}, Case 1. Relative errors for the goal functionals on several refinement levels ($l$) and effectivity index $I_{eff}$ that is  computed for $J_{\mathfrak{E}}$ (\ref{ErrorrepresentationFunctionalapprox}). }
		\label{Example 1b: errortable}
\end{table}

	\begin{table}[H]
		\tiny
		\centering
		\begin{tabular}{|r|r|c|l|l|l|c|}
			\hline
				$l$ & \text{DOFs} & Error in $J_{\mathfrak{E}}$ & $I_{eff}$ & $I_{effp}$ & $I_{effa}$ & Newton steps \\ \hline
				1   & 117    & 7.43E-01                    & 0.63      & 0.6        & 0.65       & 8           \\ \hline
				2   & 161    & 2.54E-01                    & 0.53      & 0.27       & 0.79       & 2           \\ \hline
				3   & 290    & 1.94E-01                    & 0.84      & 0.24       & 1.43       & 2           \\ \hline
				4   & 447    & 8.40E-02                    & 0.81      & 0.28       & 1.34       & 5           \\ \hline
				5   & 791    & 5.39E-02                    & 0.96      & 0.48       & 1.45       & 4           \\ \hline
				6   & 1 331   & 2.89E-02                    & 1.14      & 0.24       & 2.05       & 1           \\ \hline
				7   & 2 541   & 1.06E-02                    & 1.92      & 0.02       & 3.86       & 3           \\ \hline
				8   & 4 582   & 7.71E-03                    & 1.38      & 0.41       & 2.36       & 4           \\ \hline
				9   & 7 378   & 4.09E-03                    & 1.64      & 0.74       & 2.55       & 2           \\ \hline
				10   & 11 772  & 3.23E-03                    & 1.51      & 0.7        & 2.32       & 4           \\ \hline
				11  & 20 443  & 6.23E-04                    & 1.87      & 1.06       & 2.67       & 2           \\ \hline
				12  & 37 747  & 7.03E-04                    & 1.87      & 0.66       & 3.07       & 6           \\ \hline
				13  & 64 316  & 3.41E-04                    & 1.63      & 0.39       & 2.87       & 4           \\ \hline
				14  & 104 832 & 2.42E-04                    & 1.44      & 0.5        & 2.38       & 3           \\ \hline
		\end{tabular}
			\captionof{table}{Errors in $J_\mathfrak{E}$, effectivity indices and number of Newton steps for $p=4$ and Algorithm \ref{inexat_newton_algorithm_for_multiple_goal_functionals}.}\label{Example1c_inexact_newton}
	\end{table}

\begin{figure}[H]
	\tiny
	\centering
	\begin{tabular}{|r|r|c|l|l|l|c|}
		\hline
		$l$ & \text{DOFs} & Error in $J_{\mathfrak{E}}$ & $I_{eff}$ & $I_{effp}$ & $I_{effa}$ & Newton steps \\ \hline
		1   & 117    & 7.43E-01       & 0.63      & 0.60       & 0.65       & 7           \\ \hline
		2   & 161    & 2.58E-01       & 0.52      & 0.26       & 0.79       & 4           \\ \hline
		3   & 290    & 1.94E-01       & 0.84      & 0.24       & 1.44       & 4           \\ \hline
		4   & 447    & 8.41E-02       & 0.81      & 0.28       & 1.34       & 6           \\ \hline
		5   & 791    & 5.40E-02       & 0.96      & 0.48       & 1.45       & 4           \\ \hline
		6   & 1 331   & 2.70E-02       & 1.38      & 0.38       & 2.39       & 6           \\ \hline
		7   & 2 198   & 2.02E-02       & 1.13      & 0.56       & 1.70       & 5           \\ \hline
		8   & 4 012   & 9.07E-03       & 1.43      & 0.70       & 2.16       & 6           \\ \hline
		9   & 6 879   & 4.02E-03       & 1.75      & 0.32       & 3.18       & 5           \\ \hline
		10   & 11 576  & 3.27E-03       & 1.40      & 0.62       & 2.19       & 6           \\ \hline
		11  & 20 187  & 8.20E-04       & 2.11      & 0.85       & 3.37       & 6           \\ \hline
		12  & 38 302  & 6.77E-04       & 1.78      & 0.54       & 3.02       & 7           \\ \hline
		13  & 64 740  & 3.12E-04       & 1.67      & 0.32       & 3.02       & 7           \\ \hline
		14  & 105 350 & 2.46E-04       & 1.35      & 0.47       & 2.22       & 5           \\ \hline
	\end{tabular}
	\captionof{table}{Errors in $J_\mathfrak{E}$, effectivity indices and
          number of Newton steps for $p=4$ and Algorithm \ref{inexat_newton_algorithm_for_multiple_goal_functionals} where $|\mathcal{A}(u^{l,k}_h)(z^{l,k}_h)|> 10^{-2} \eta_h^{l-1}$ is replaced by $\Vert \mathcal{A}(u_h^{l,k}) \Vert_{\ell_\infty}> 10^{-8}$.}\label{Example1c_exact_newton}
\end{figure}

	\begin{figure}
		\begin{minipage}[t]{0.45\linewidth}
			\centering
			\ifMAKEPICS
			\begin{gnuplot}[terminal=epslatex]
				set output "Figures/Example1cCase1gnuplottex"
				set datafile separator "|"
				set logscale
				set grid ytics lc rgb "#bbbbbb" lw 1 lt 0
				set grid xtics lc rgb "#bbbbbb" lw 1 lt 0
				set xlabel '\text{DOFs}'
				set ylabel 'error'
				set format '
				plot '< sqlite3 Data/computation_data/Cheese/p=4eps=E-10/p4computed.db "SELECT DISTINCT DOFS_primal_pf, relativeError0  from data WHERE DOFS_primal_pf <= 105000"' u 1:2 w  lp lw 2 title '$J_1$(adaptive)',\
				'< sqlite3 Data/computation_data/Cheese/p=4eps=E-10/p4computed.db "SELECT DISTINCT DOFS_primal_pf, relativeError1  from data WHERE DOFS_primal_pf <= 105000"' u 1:2 w  lp lw 2 title '$J_2$(adaptive)',\
				'< sqlite3 Data/computation_data/Cheese/p=4eps=E-10/p4computed.db "SELECT DISTINCT DOFS_primal_pf, relativeError2  from data WHERE DOFS_primal_pf <= 105000"' u 1:2 w  lp lw 2 title '$J_3$(adaptive)',\
				'< sqlite3 Data/computation_data/Cheese/p=4eps=E-10/p4computed.db "SELECT DISTINCT DOFS_primal_pf, relativeError3  from data WHERE DOFS_primal_pf <= 105000"' u 1:2 w  lp lw 2 title '$J_4$(adaptive)',\
				'< sqlite3 Data/computation_data/Cheese/p=4eps=E-10/p4computed.db "SELECT DISTINCT DOFS_primal_pf, Exact_Error  from data WHERE DOFS_primal_pf <= 105000"' u 1:2 w  lp lw 2 linecolor "red" title '$J_\mathfrak{E}$(adaptive)',\
				1.5*x**(-0.6) lw 4 dashtype 2  title '$\mathcal{O}(\text{DOFs}^{-\frac{3}{5}})$',\
				5/x   lw 4 dashtype 2  title '$\mathcal{O}(\text{DOFs}^{-1})$'
				# '< sqlite3 Data/computation_data/Cheese/p=4eps=E-10/p4computed.db "SELECT DISTINCT DOFS_primal_pf, Exact_Error from data"' u 1:2 w lp title 'Exact Error',\
				'< sqlite3 Data/computation_data/Cheese/p=4eps=E-10/p4computed.db "SELECT DISTINCT  DOFS_primal_pf, Estimated_Error from data"' u 1:2 w lp title 'Estimated Error',\
				'< sqlite3 Data/computation_data/Cheese/p=4eps=E-10/p4computed.db "SELECT DISTINCT  DOFS_primal_pf, Estimated_Error_primal from data"' u 1:2 w lp title 'Estimated Error(primal)',\
				'< sqlite3 Data/computation_data/Cheese/p=4eps=E-10/p4computed.db "SELECT DISTINCT  DOFS_primal_pf, Estimated_Error_adjoint from data"' u 1:2 w lp title 'Estimated(adjoint)',\
				
			\end{gnuplot}
			\fi
			\scalebox{0.65}{\input{Example1cCase1gnuplottex}}
			\captionof{figure}{Section \ref{subsubsection Example
                            1c}, Case 1. Error vs DOFs for $p=4$, $\varepsilon=10^{-10}$.}\label{Example1c: Case 1 gnuplot2}
		\end{minipage}
		\hspace{0.1 \linewidth}
		\begin{minipage}[t]{0.45\linewidth}
			\centering
			\ifMAKEPICS
			\begin{gnuplot}[terminal=epslatex]
				set output "Figures/Example1cCase2gnuplottex"
				set datafile separator "|"
				set logscale
				set grid ytics lc rgb "#bbbbbb" lw 1 lt 0
				set grid xtics lc rgb "#bbbbbb" lw 1 lt 0
				set xlabel '\text{DOFs}'
				set ylabel 'error'
				set format '
				plot '< sqlite3 Data/computation_data/Cheese/p133epsE-10/p133computed.db "SELECT DISTINCT DOFS_primal_pf, relativeError0  from data WHERE DOFS_primal_pf <= 45000"' u 1:2 w  lp lw 2 title '$J_1$(adaptive)',\
				'< sqlite3 Data/computation_data/Cheese/p133epsE-10/p133computed.db "SELECT DISTINCT DOFS_primal_pf, relativeError1  from data WHERE DOFS_primal_pf <= 45000"' u 1:2 w  lp lw 2 title '$J_2$(adaptive)',\
				'< sqlite3 Data/computation_data/Cheese/p133epsE-10/p133computed.db "SELECT DISTINCT DOFS_primal_pf, relativeError2  from data WHERE DOFS_primal_pf <= 45000"' u 1:2 w  lp lw 2 title '$J_3$(adaptive)',\
				'< sqlite3 Data/computation_data/Cheese/p133epsE-10/p133computed.db "SELECT DISTINCT DOFS_primal_pf, relativeError3  from data WHERE DOFS_primal_pf <= 45000"' u 1:2 w  lp lw 2 title '$J_4$(adaptive)',\
				'< sqlite3 Data/computation_data/Cheese/p133epsE-10/p133computed.db "SELECT DISTINCT DOFS_primal_pf, Exact_Error  from data WHERE DOFS_primal_pf <= 45000"' u 1:2 w lp lw 2 linecolor "red" title '$J_\mathfrak{E}$(adaptive)',\
				1.5*x**(-0.6) lw 4 dashtype 2  title '$\mathcal{O}(\text{DOFs}^{-\frac{3}{5}})$',\
				5/x   lw 4 dashtype 2  title '$\mathcal{O}(\text{DOFs}^{-1})$'
			\end{gnuplot}
			\fi
			\scalebox{0.65}{\input{Example1cCase2gnuplottex}}
			\captionof{figure}{Section \ref{subsubsection Example 1c}, Case 2. Error vs DOFs for $p=1.33$ and $\varepsilon=10^{-10}$.}\label{Example1c: Case 2 gnuplot2}
		\end{minipage}
	\end{figure}
	
		\begin{figure}
			\begin{minipage}[t]{0.45\linewidth}
				\centering
				\ifMAKEPICS
				\begin{gnuplot}[terminal=epslatex]
					set output "Figures/Example1cCase1gnuplot1tex"
					set datafile separator "|"
					set logscale
					set grid ytics lc rgb "#bbbbbb" lw 1 lt 0
					set grid xtics lc rgb "#bbbbbb" lw 1 lt 0
					set xlabel '\text{DOFs}'
					set ylabel 'relative error'
					set format '
					plot '< sqlite3 Data/computation_data/Cheese/p=4eps=E-10/p4computed.db "SELECT DISTINCT DOFS_primal_pf, relativeError0  from data WHERE relativeError0 >= 1e-5"' u 1:2 w  lp lw 2 title '$J_1$(adaptive)',\
					'< sqlite3 Data/computation_data/Cheese/p=4eps=E-10/p4computed.db "SELECT DISTINCT DOFS_primal_pf, relativeError0 from data_global"' u 1:2 w lp lw 2 title '$J_1$(uniform)',\
									'< sqlite3 Data/computation_data/Cheese/p=4eps=E-10/p4computed.db "SELECT DISTINCT DOFS_primal_pf, relativeError1 from data WHERE DOFS_primal_pf <= 109000"' u 1:2 w lp lw 2 title '$J_2$(adaptive)',\
									'< sqlite3 Data/computation_data/Cheese/p=4eps=E-10/p4computed.db "SELECT DISTINCT DOFS_primal_pf, relativeError1 from data_global"' u 1:2 w lp lw 2  title '$J_2$(uniform)',\
					1.5*x**(-0.6) lw 4 dashtype 2 linecolor "red" title '$\mathcal{O}(\text{DOFs}^{-\frac{3}{5}})$',\
					5/x   lw 4 dashtype 2  title '$\mathcal{O}(\text{DOFs}^{-1})$'
					#5*1e-5 lw 2 dashtype 1 linecolor "red"  title '$TOL$
					# '< sqlite3 Data/computation_data/Cheese/p=4eps=E-10/p4computed.db "SELECT DISTINCT DOFS_primal_pf, Exact_Error from data"' u 1:2 w lp title 'Exact Error',\
					'< sqlite3 Data/computation_data/Cheese/p=4eps=E-10/p4computed.db "SELECT DISTINCT  DOFS_primal_pf, Estimated_Error from data"' u 1:2 w lp title 'Estimated Error',\
					'< sqlite3 Data/computation_data/Cheese/p=4eps=E-10/p4computed.db "SELECT DISTINCT  DOFS_primal_pf, Estimated_Error_primal from data"' u 1:2 w lp title 'Estimated Error(primal)',\
					'< sqlite3 Data/computation_data/Cheese/p=4eps=E-10/p4computed.db "SELECT DISTINCT  DOFS_primal_pf, Estimated_Error_adjoint from data"' u 1:2 w lp title 'Estimated(adjoint)',\
	
				\end{gnuplot}
				\fi
				\scalebox{0.65}{\input{Example1cCase1gnuplot1tex}}
				\captionof{figure}{Section \ref{subsubsection
                                    Example 1c}, Case 1. Error vs \text{DOFs} for $p=4$ and $\varepsilon=10^{-10}$.}\label{Example1c: Case 1 gnuplot}
			\end{minipage}
			\hspace{0.1 \linewidth}
			\begin{minipage}[t]{0.45\linewidth}
				\centering
				\ifMAKEPICS
				\begin{gnuplot}[terminal=epslatex]
					set output "Figures/Example1cCase2gnuplot1tex"
					set datafile separator "|"
					set logscale
					set grid ytics lc rgb "#bbbbbb" lw 1 lt 0
					set grid xtics lc rgb "#bbbbbb" lw 1 lt 0
					set xlabel '\text{DOFs}'
					set ylabel 'relative error'
					set format '
					plot '< sqlite3 Data/computation_data/Cheese/p133epsE-10/p133computed.db "SELECT DISTINCT DOFS_primal_pf, relativeError0 from data WHERE DOFS_primal_pf <= 45000"' u 1:2 w  lp lw 2 title '$J_1$(adaptive)',\
					'< sqlite3 Data/computation_data/Cheese/p133epsE-10/p133computed.db "SELECT DISTINCT DOFS_primal_pf, relativeError0 from data_global"' u 1:2 w lp lw 2 title '$J_1$(uniform)',\
					'< sqlite3 Data/computation_data/Cheese/p133epsE-10/p133computed.db "SELECT DISTINCT DOFS_primal_pf, relativeError2 from data WHERE DOFS_primal_pf <= 45000"' u 1:2 w lp lw 2 title '$J_3$(adaptive)',\
					'< sqlite3 Data/computation_data/Cheese/p133epsE-10/p133computed.db "SELECT DISTINCT DOFS_primal_pf, relativeError2 from data_global"' u 1:2 w lp lw 2  title '$J_3$(uniform)',\
					4*x**(-0.75) lw 4 dashtype 2  linecolor "red"  title '$\mathcal{O}(\text{DOFs}^{-\frac{3}{4}})$',\
					25/x   lw 4  dashtype 2 title '$\mathcal{O}(\text{DOFs}^{-1})$'
					#1e-5 lw 2 dashtype 1 linecolor "red"   title '$TOL$
				\end{gnuplot}
				\fi
				\scalebox{0.65}{\input{Example1cCase2gnuplot1tex}}
				\captionof{figure}{Section \ref{subsubsection
                                    Example 1c}, Case 2. Error vs \text{DOFs} for $p=1.33$ and $\varepsilon=10^{-10}$.}\label{Example1c: Case 2 gnuplot}
			\end{minipage}
			%
			
		\end{figure}
		
\noindent
\textbf{Case 2 ($p=1.33$, $\varepsilon=10^{-10}$):}\\
We are interested in the same goal functional as in Case 1 but with $p=1.33$.
The following values, which are computed on a fine grid (8 global refinements,
$Q_c^2$ elements, $22\, 038\, 525$ \text{DOFs}) on the cluster RADON1,
are used to compute the reference values:
\newline
\begin{minipage}[t]{0.5\linewidth}
	\begin{align*}
		\int_{\Omega} u(x,y)\,d(x,y)\approx & 0.48510099008 \pm 4\times10^{-5},\\
		\int_{(2,3)\times(2,3)} u(x,y)\,d(x,y)\approx & 0.038058285978\pm 4\times 10^{-6},\\
		u(2.9,2.1)\approx & 0.034930138311\pm 4\times 10^{-6},
	\end{align*}
\end{minipage}
\begin{minipage}[t]{0.5\linewidth}
	\begin{align*}
		u(2.9,2.1)\approx & 0.034930138311\pm 4\times 10^{-6},\\
		u(0.6,0.6)\approx & 0.024478640536\pm 2\times 10^{-6},\\
		u(2.5,2.5)\approx & 0.039616834482\pm 4\times10^{-6}.\\
	\end{align*}
\end{minipage}

Considering again 
that 
the accuracy of the functional evaluations 
is valid, 
we observe that the relative error of $J_2$ is less than $8\times 10^{-4}$ and the relative  error of $J_1,J_3,J_4$ is less than $10^{-4}$.
As in Case 1, we compare the relative errors of the functionals in Figure \ref{Example1c: Case 2 gnuplot2}. 
Here we see that the error in $J_\mathfrak{E}$ bounds the relative errors. 
However, we loose control of the single functionals as long as they do not dominate the error, 
as for $J_2$ in Figure \ref{Example1c: Case 2 gnuplot2}.
In Case 2, $J_3$ and $J_1$, are these functionals. 
In the error plot given in  Figure \ref{Example1c: Case 2 gnuplot}, we  observe that the error  approximately behaves like 
$\mathcal{O}(\text{DOFs}^{-\frac{3}{4}})$ for a uniformly refined mesh, and  $\mathcal{O}(\text{DOFs}^{-1})$ for adaptive refinement, as for $p=4$. It turns out that the regions of refinement (except for corner singularities and the point evaluations)  have almost  a complementary structure for $p=1.33$ and $p=4$ as we can conclude from Figure \ref{Example 1c: p=4 Mesh11} and Figure \ref{Example 1c: p=1.33 Mesh11}.
\begin{figure}[H]
	\begin{minipage}[t]{0.3\linewidth}
		\hspace{-0.15\linewidth}
		\includegraphics[scale = 0.3]{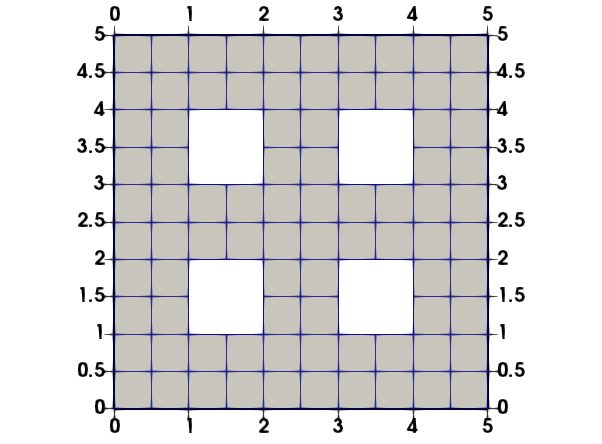}
		\caption{Initial mesh.}
		\label{initcheese}
	\end{minipage}
	\hspace{0.05 \linewidth}
		\begin{minipage}[t]{0.3\linewidth}

			\includegraphics[scale = 0.180]{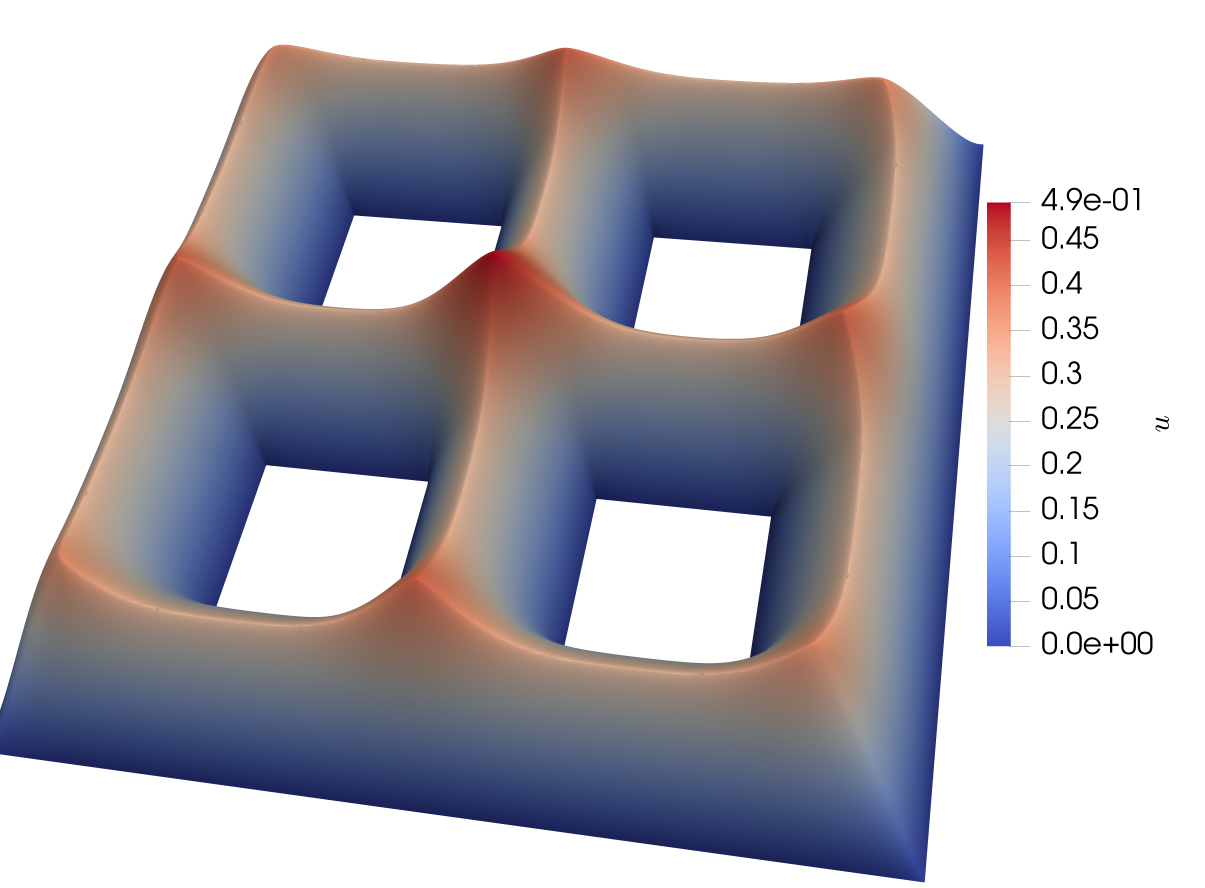}
			\caption{\ref{subsubsection Example 1c}: Solution for $p=4$.}
			\label{Example 1c: p=4 solution}
		\end{minipage}
		\hspace{0.05 \linewidth}
		\begin{minipage}[t]{0.3\linewidth}

			\includegraphics[scale = 0.180]{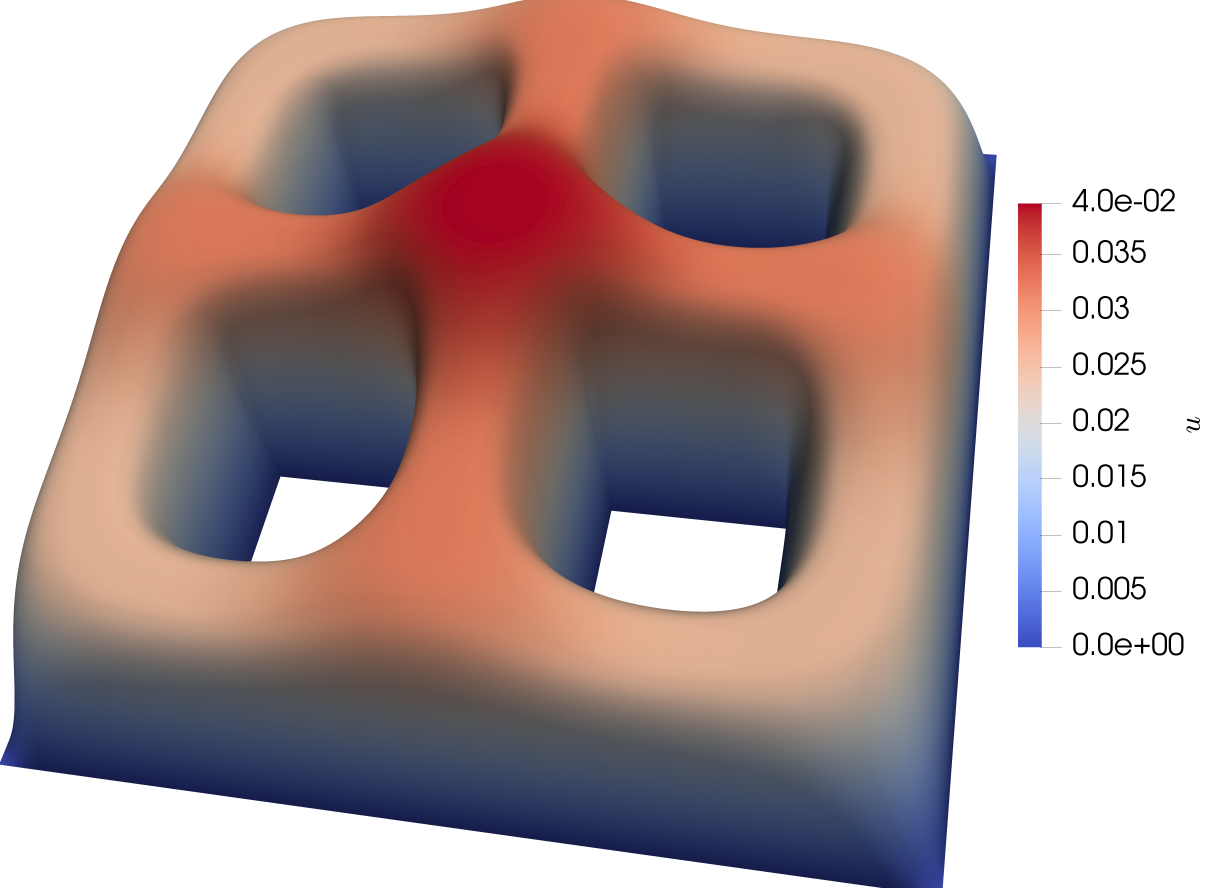}
			\caption{Section \ref{subsubsection Example 1c}: Solution for $p=1.33$.}
			\label{Example 1c: p=1.33 solution}
		\end{minipage}

\end{figure}
\begin{figure}[H]
	\begin{minipage}[t]{0.3\linewidth}
		\hspace{-0.35\linewidth}
		\includegraphics[scale = 0.48]{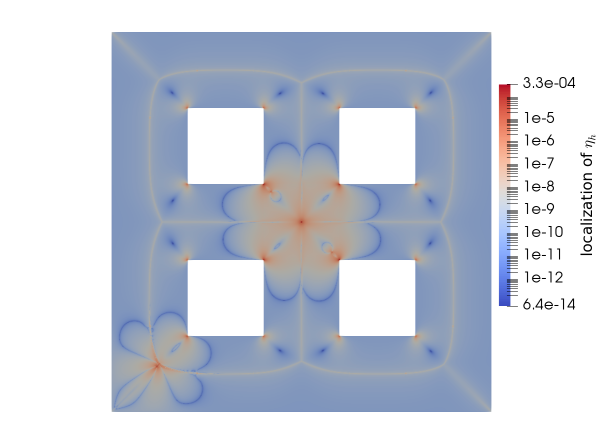}
		\caption{Section \ref{subsubsection Example 1c}: Local error estimator after $6$ uniform refinements for $p=4$.}
		\label{Example 1c:p4 error estimator}
	\end{minipage}
	\hspace{0.05 \linewidth}
	\begin{minipage}[t]{0.3\linewidth}
		\hspace{-0.25\linewidth}
		\includegraphics[scale = 0.18]{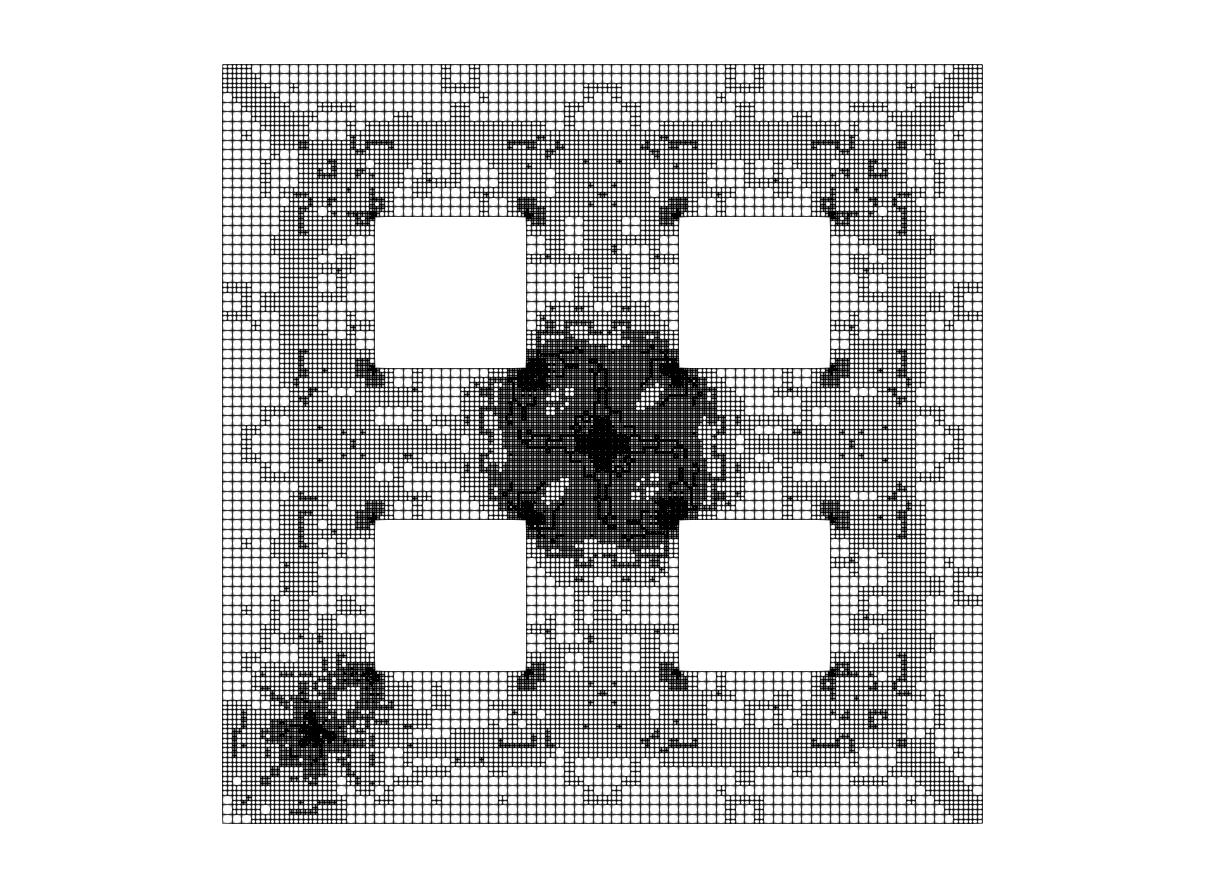}
		\caption{Section \ref{subsubsection Example 1c}: Mesh after $11$ adaptive refinements for $p=4$ ($37$ $747$ \text{DOFs}).}
		\label{Example 1c: p=4 Mesh11}
	\end{minipage}
	\hspace{0.05 \linewidth}
	\begin{minipage}[t]{0.3\linewidth}
		\hspace{-0.25\linewidth}
		\includegraphics[scale = 0.18]{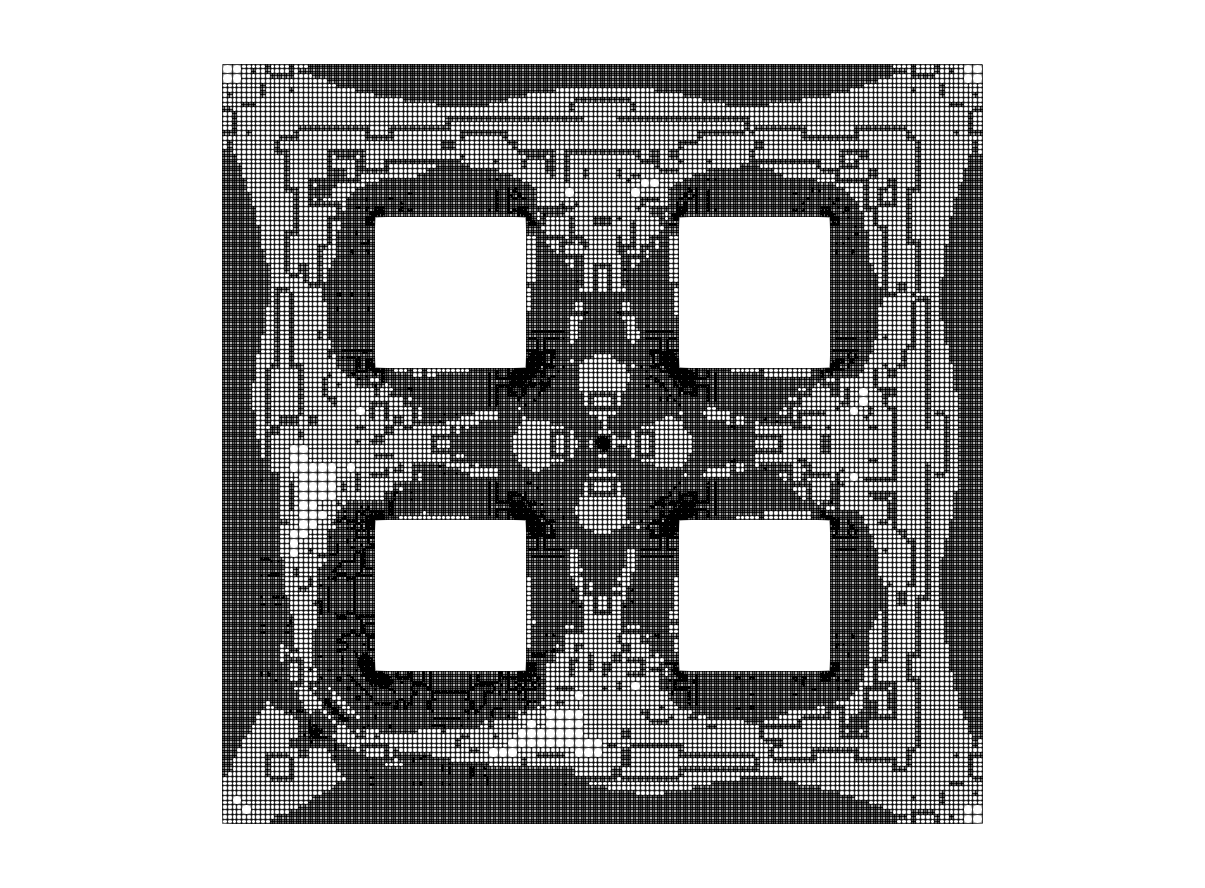}
		\caption{Section \ref{subsubsection Example 1c}: Mesh after $11$ adaptive refinements for $p=1.33$ ($40$ $499$ \text{DOFs}).}
		\label{Example 1c: p=1.33 Mesh11}
	\end{minipage}
\end{figure}



\subsection{Example 2: A quasilinear PDE system}
\label{QuasilinearvectorPDE}
%
In this second numerical test, we further substantiate our 
approach for a nonlinear, coupled, PDE system.
We consider the following nonlinear boundary value problem:
Find $u=(u_1,u_2,u_3) $ such that
\begin{align*}
	\begin{aligned}
		-\Delta u_1 +u_2 +u_3& =1,\quad \text{in } \Omega, \\
		-\Delta u_2 + g_1(1-u_2)-g_1(u_3)& =0,\quad \text{in } \Omega, \\
		-\text{div}(g_2(u_1+u_2)\nabla u_3) + g_1(u_3)-g_1(u_1) &= 0,\quad \text{in } \Omega, 
	\end{aligned} 
\end{align*}
is fulfilled in a weak sense, where 
\begin{align*}
u_1(x,y)=1-u_2(x,y)=u_3(x,y)& = \text{sign($y$)}\sqrt{\sqrt{x^2+y^2}-x} \qquad \text{on } \Gamma_D,\\
\nabla u_1.\vec{n} =\nabla  u_2.\vec{n} = g_2(u_1+u_2) \nabla u_3 .\vec{n} & =0  \qquad \text{on } \Gamma_N.
\end{align*}
Here sign denotes the signum function as defined in (\ref{sign}).
The functions $g_1$ and $g_2$ are given by
$	g_1(t):=e^t-\text{sin}(t-1) $ and $g_2(t):=e^{t^2-t}$, respectively.
Obviously a solution is given by $u_1(x,y)=1-u_2(x,y)=u_3(x,y) = \text{sign(y)}\sqrt{\sqrt{x^2+y^2}-x}$ in $\Omega$.
The computational domain is a slit domain as in \cite{EnWi17,Wi16_dwr_pff,AnMi15} 
and visualised in Figure \ref{Example 2: slit domain}.  
The boundary conditions above introduces  a discontinuity on the slit-boundary $(-1,0) \times \{0\}$ and consequently a discontinuity in the solution.  
The construction of this example was motivated by \cite{AnMi15,BoDa01}. 
Let $J_A,J_B,J_C,J_D,J_E,J_F$  be defined as follows:\\
\begin{minipage}[t]{0.33\linewidth}
\begin{align*}
J_A(u):=& u_3(-0.5,0.01),\\
J_D(u):=&\int_{\Omega} \Phi_D(x,y) \cdot u(x,y) \,d(x,y),
\end{align*}
\end{minipage}
\begin{minipage}[t]{0.33\linewidth}\textbf{}
	\begin{align*}
J_B(u)&:=u_1(-0.01,0.01),\\
J_E(u)&:=u_1(-0.9,-0.9),
	\end{align*}
\end{minipage}
\begin{minipage}[t]{0.33\linewidth}
	\begin{align*}
J_C(u):=&\int_{\Omega} \Phi_C(x,y) \cdot u(x,y)\, d(x,y),\\
J_F(u):= &u_2(-0.9,-0.1),\\
	\end{align*}
\end{minipage}
where $\Phi_C(x,y) :=(0,0, \chi_C(x,y) )$ and
$$
\Phi_D(x,y) :=(-4 \chi_D(x,y),\frac{2\chi_D(x,y)}{1-\text{sign(y)}\sqrt{\sqrt{x^2+y^2}-x}},4\chi_D(x,y)),$$
with $$\chi_C(x,y):=\begin{cases}
y-x  \quad &x<y \\
0 \quad &x \geq y 
\end{cases}
\quad \mbox{and} \quad
\chi_D(x,y):=\begin{cases}
	1  \quad &x,y>0  \\
	0  \quad &\text{else}
	\end{cases}
.$$

\vspace*{0.25cm}
We are now interested in the six goal functionals

\begin{minipage}[t]{0.33\linewidth}
	\begin{align*}
	J_{1}(u):=& J_B(u)J_D(u),\\
	J_{4}(u):=& J_B(u)J_E(u),\\
	\end{align*}
\end{minipage}
\begin{minipage}[t]{0.33\linewidth}
	\begin{align*}
	J_{2}(u):=& J_A(u)J_C(u),\\
	J_{6}(u):=& J_B^3(u)J_E(u),\\
	\end{align*}
\end{minipage}
\begin{minipage}[t]{0.33\linewidth}
	\begin{align*}
	J_{3}(u):=& J_A(u)J_C(u)J_F(u),\\
	J_{6}(u)=& J_C(u).\\
	\end{align*}
\end{minipage}
\newline
For the functional $J_B$, we can not expect optimal convergence rates for
uniform refinement due to the singularity at the slit tip.
Consequently, the same is true for the functionals
$J_1$, $ J_4$ and $ J_5$ as monitored in 
Figures~\ref{Example_2ErrorDOFsJ0J2},\ref{Example_2ErrorDOFsJ1J3} and \ref{Example_2ErrorDOFsJ4J5}. 
For uniform refinement, we got a relative error in $J_1$ of about $1.409531 \times
10^{-2}$ with $3$ $153$ $411$ \text{DOFs} as visualized in Figure~\ref{Example_2ErrorDOFsJ0J2}. 
To achieve a relative error less than
$1.409531 \times 10^{-2}$ our adaptive algorithm just needs $2$ $538 $ DOFs ( $1.042219 \times 10^{-2}$ ).  
If we use a similar number of DOFs ($3$ $021$ $045$), 
then  a relative error of $2.829422\times 10^{-6}$ is achieved.  
Figures~\ref{Example_2ErrorDOFsJ0J2}, \ref{Example_2ErrorDOFsJ1J3} and \ref{Example_2ErrorDOFsJ4J5}
might also lead to the conclusion that we obtain a convergence rate
$\mathcal{O}(\text{DOFs}^{-1})$ for all given functionals, where the functionals for
uniform refinement just converge with approximately
$\mathcal{O}(\text{DOFs}^{-\frac{1}{2}})$. 
This means, to obtain a relative error in
$J_1$ of about $2.829422\times 10^{-6}$ for uniform refinement, we would need
approximately $5 \times 10^{13}$ \text{DOFs}.
This would mean just storing the solution would require approximately 400 Terabyte. 
Therefore, obtaining this accuracy by means of uniform refinement would even be a hard task on the supercomputer Sunway TaihuLight\footnote{\url{http://www.nsccwx.cn/wxcyw/}}, which is  number one the of TOP500\footnote{\url{https://www.top500.org/lists/2017/11/}} list from November 2017.

We remark that $I_{eff}$, illustrated in Figure~\ref{Ieffs}, 
has no importance on course meshes since the approximations properties are bad anyway. 
On finer meshes, we see excellent behavior.

\begin{figure}[H]
	
	\begin{minipage}[t]{0.45\linewidth}
	\definecolor{ttttff}{rgb}{0.2,0.2,1}\definecolor{uuuuuu}{rgb}{0.26666666666666666,0.26666666666666666,0.26666666666666666}\definecolor{ffqqtt}{rgb}{1,0,0.2}\definecolor{cqcqcq}{rgb}{0.7529411764705882,0.7529411764705882,0.7529411764705882}\begin{tikzpicture}[line cap=round,line join=round,>=triangle 45,x=3.3cm,y=3.3cm]\draw [color=cqcqcq,dash pattern=on 1pt off 1pt, xstep=0.5cm,ystep=0.5cm] (-1.1859438838358034,-1.076009411190554) grid (1.1805682302792468,1.0396356943610634);\draw[->,color=black] (-1.1859438838358034,0) -- (1.1805682302792468,0);\foreach \x in {-1,-0.5,0.5,1}\draw[shift={(\x,0)},color=black] (0pt,2pt) -- (0pt,-2pt) node[below] {\footnotesize $\x$};\draw[->,color=black] (0,-1.076009411190554) -- (0,1.0396356943610634);\foreach \y in {-1,-0.5,0.5,1}\draw[shift={(0,\y)},color=black] (2pt,0pt) -- (-2pt,0pt) node[left] {\footnotesize $\y$};\draw[color=black] (0pt,-10pt) node[right] {\footnotesize $0$};\clip(-1.1859438838358034,-1.076009411190554) rectangle (1.1805682302792468,1.0396356943610634);
\fill[line width=2pt,color=ffqqtt,fill=ffqqtt,fill opacity=0.21] (-1,-1) -- (1,-1) -- (1,1) -- (-1,1) -- cycle;\draw [line width=2pt,color=ffqqtt] (-1,-1)-- (1,-1);\draw [line width=2pt,color=ffqqtt] (1,-1)-- (1,1);\draw [line width=2pt,color=ffqqtt] (1,1)-- (-1,1);\draw [line width=2pt,color=ffqqtt] (-1,1)-- (-1,-1);\draw [line width=3.2pt,color=ttttff] (-1,0)-- (0,0);\begin{scriptsize}\draw[color=ffqqtt] (0.10811176867057129,-0.9227001718708244) node {$\Gamma_D$};\draw [fill=uuuuuu] (0,0) circle (2pt);\draw[color=ttttff] (-0.3643544307905606,0.10857131538010847) node {$\Gamma_N$};\end{scriptsize}\end{tikzpicture}
		\caption{Example 2: The slit domain  $\Omega$ with $\Gamma_D$ (red) and $\Gamma_N$ (blue).}
		\label{Example 2: slit domain}
	\end{minipage}
	\hspace{0.1\linewidth}
	\begin{minipage}[t]{0.45\linewidth}
		\hspace{-0.15 \linewidth}
		\includegraphics[scale = 0.0575]{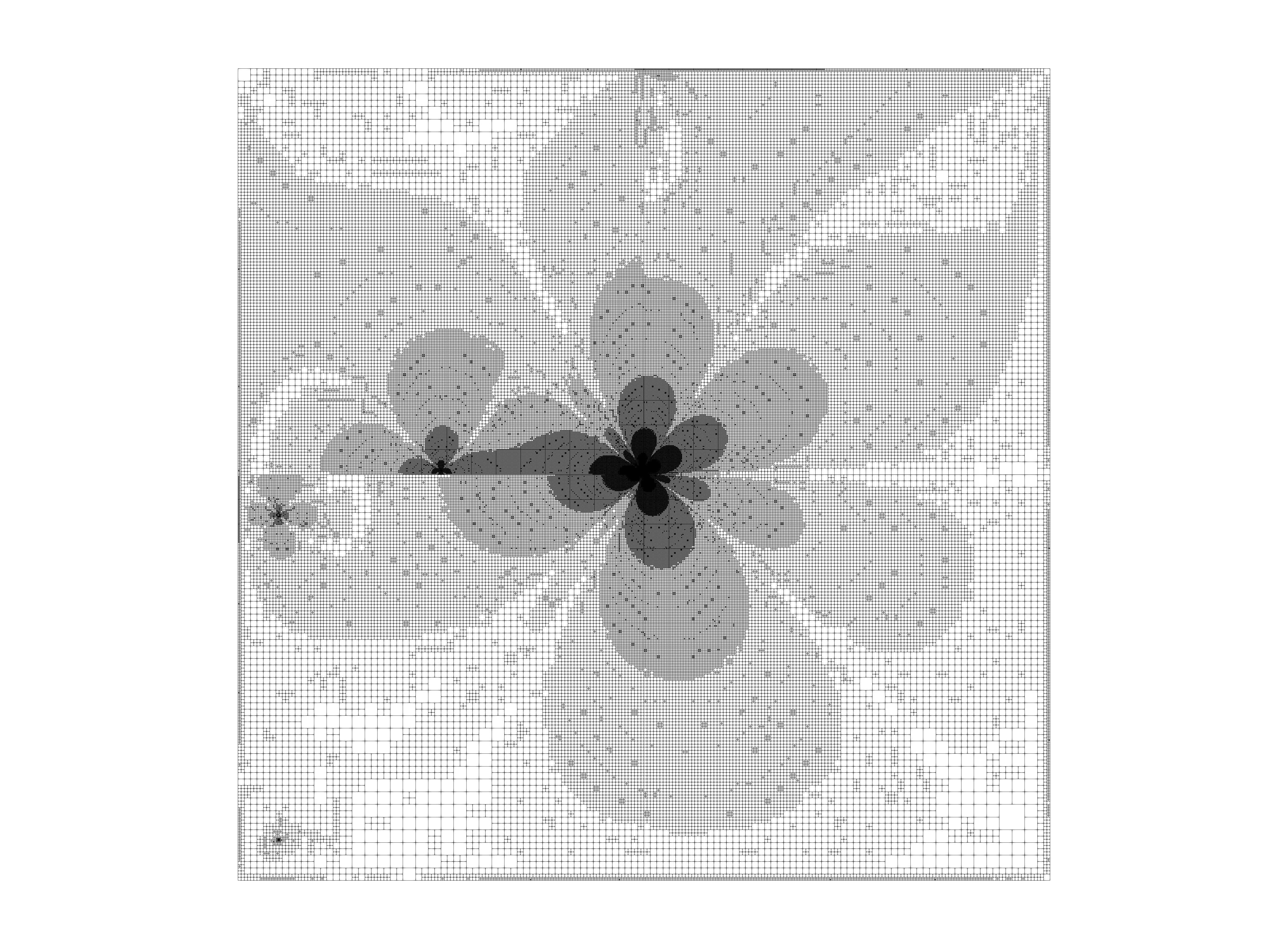}
		\caption{
		Example 2:  Adaptively refined mesh for $J_{\mathfrak{E}}$ after $24$ refinements ($683$ $118$ DOFs).} 
		\label{Example2Mesh24}
	\end{minipage}
\end{figure}

\begin{figure}[H]
	
	\begin{minipage}[t]{0.45\linewidth}
		\centering
		\ifMAKEPICS
		\begin{gnuplot}[terminal=epslatex]
			set output "Figures/Error_nonlineartex"
			set grid ytics lc rgb "#bbbbbb" lw 1 lt 0
			set grid xtics lc rgb "#bbbbbb" lw 1 lt 0
			set datafile separator "|"
			set logscale
			set format "
			#set title "Error in $J_n$"
			set xlabel "DOFs"
			set ylabel "relative error"
			set key bottom left
			plot [50:10000000] '< sqlite3 Data/computationdata/nonlinear_problem/Laplacetoy2.db "SELECT DISTINCT DOFS_primal, relativeError1 from data"' u 1:2 w lp lw 2 title '$J_2$(adaptive)',\
				'< sqlite3 Data/computationdata/nonlinear_problem/Laplacetoy2.db "SELECT DISTINCT DOFS_primal, relativeError1 from data_global"' u 1:2 w lp lw 2 title '$J_2$(uniform)',\
							'< sqlite3 Data/computationdata/nonlinear_problem/Laplacetoy2.db "SELECT DISTINCT DOFS_primal, relativeError3 from data"' u 1:2 w lp lw 2 title '$J_4$(adaptive)',\
							'< sqlite3 Data/computationdata/nonlinear_problem/Laplacetoy2.db "SELECT DISTINCT DOFS_primal, relativeError3 from data_global"' u 1:2 w lp lw 2 title '$J_4$(uniform)',\
			10/sqrt(x) lw 4 dashtype 2 linecolor "red" title '$\mathcal{O}(\text{DOFs}^{-\frac{1}{2}})$',\
			100/x  lw 4 dashtype 2 title '$\mathcal{O}(\text{DOFs}^{-1})$'\
			# '< sqlite3 Data/computationdata/nonlinear_problem/Laplacetoy2.db "SELECT DISTINCT DOFS_primal, Exact_Error from data"' u 1:2 w lp lw 2 title '$J_{\mathfrak{E}}$(adaptive)',\
			# '< sqlite3 Data/computationdata/nonlinear_problem/Laplacetoy2.db "SELECT DISTINCT DOFS_primal, Exact_Error from data_global"' u 1:2 w lp lw 2 title '$J_{\mathfrak{E}}$(adaptive)',\
			#'< sqlite3 Data/computationdata/disturbedunitsquare/p=5/datap=5.db "SELECT DISTINCT  DOFS_primal_pf, Estimated_Error_primal from data"' u 1:2 w lp lw 2 title 'Estimated Error(primal)',\
			#'< sqlite3 Data/computationdata/disturbedunitsquare/p=5/datap=5.db "SELECT DISTINCT  DOFS_primal_pf, Estimated_Error_adjoint from data"' u 1:2 w lp lw 2 title 'Estimated(adjoint)',\
		\end{gnuplot}
		\fi
		\scalebox{0.65}{\input{Error_nonlineartex}}
		\caption{
		Example 2: Error vs DOFs.}\label{Example_2ErrorDOFsJ1J3}
	\end{minipage}
	\hspace{0.1 \linewidth}
	\begin{minipage}[t]{0.45\linewidth}
		\centering
		\ifMAKEPICS
		\begin{gnuplot}[terminal=epslatex]
			set output "Figures/Error_nonlinear2tex"
			set grid ytics lc rgb "#bbbbbb" lw 1 lt 0
			set grid xtics lc rgb "#bbbbbb" lw 1 lt 0
			set datafile separator "|"
			set logscale
			set format "
			#set title "Error in $J_n$"
			set xlabel "DOFs"
			set ylabel "relative error"
			set key bottom left
			plot [50:10000000] '< sqlite3 Data/computationdata/nonlinear_problem/Laplacetoy2.db "SELECT DISTINCT DOFS_primal, relativeError0 from data"' u 1:2 w lp lw 2 title '$J_1$(adaptive)',\
			'< sqlite3 Data/computationdata/nonlinear_problem/Laplacetoy2.db "SELECT DISTINCT DOFS_primal, relativeError0 from data_global"' u 1:2 w lp lw 2 title '$J_1$(uniform)',\
			'< sqlite3 Data/computationdata/nonlinear_problem/Laplacetoy2.db "SELECT DISTINCT DOFS_primal, relativeError2 from data"' u 1:2 w lp lw 2 title '$J_3$(adaptive)',\
			'< sqlite3 Data/computationdata/nonlinear_problem/Laplacetoy2.db "SELECT DISTINCT DOFS_primal, relativeError2 from data_global"' u 1:2 w lp lw 2 title '$J_3$(uniform)',\
			10/sqrt(x) lw 4 dashtype 2 linecolor "red" title '$\mathcal{O}(\text{DOFs}^{-\frac{1}{2}})$',\
			10/(x**1.0)  lw 4 dashtype 2 title '$\mathcal{O}(\text{DOFs}^{-1})$'\
			# '< sqlite3 Data/computationdata/nonlinear_problem/Laplacetoy2.db "SELECT DISTINCT DOFS_primal, Exact_Error from data"' u 1:2 w lp lw 2 title '$J_{\mathfrak{E}}$(adaptive)',\
			# '< sqlite3 Data/computationdata/nonlinear_problem/Laplacetoy2.db "SELECT DISTINCT DOFS_primal, Exact_Error from data_global"' u 1:2 w lp lw 2 title '$J_{\mathfrak{E}}$(adaptive)',\
			#'< sqlite3 Data/computationdata/disturbedunitsquare/p=5/datap=5.db "SELECT DISTINCT  DOFS_primal_pf, Estimated_Error_primal from data"' u 1:2 w lp lw 2 title 'Estimated Error(primal)',\
			#'< sqlite3 Data/computationdata/disturbedunitsquare/p=5/datap=5.db "SELECT DISTINCT  DOFS_primal_pf, Estimated_Error_adjoint from data"' u 1:2 w lp lw 2 title 'Estimated(adjoint)',\
		\end{gnuplot}
		\fi
		\scalebox{0.65}{\input{Error_nonlinear2tex}}
		\caption{
		Example 2: Error vs DOFs.}\label{Example_2ErrorDOFsJ0J2}
	\end{minipage}

\end{figure}

\begin{figure}[H]
		\begin{minipage}[t]{0.45\linewidth}
			\centering
			\ifMAKEPICS
			\begin{gnuplot}[terminal=epslatex]
				set output "Figures/Error_nonlinear3tex"
				set grid ytics lc rgb "#bbbbbb" lw 1 lt 0
				set grid xtics lc rgb "#bbbbbb" lw 1 lt 0
				set datafile separator "|"
				set logscale
				set format "
				#set title "Error in $J_n$"
				set xlabel "DOFs"
				set ylabel "relative error"
				set key bottom left
				plot [50:10000000] '< sqlite3 Data/computationdata/nonlinear_problem/Laplacetoy2.db "SELECT DISTINCT DOFS_primal, relativeError4 from data"' u 1:2 w lp lw 2 title '$J_5$(adaptive)',\
				'< sqlite3 Data/computationdata/nonlinear_problem/Laplacetoy2.db "SELECT DISTINCT DOFS_primal, relativeError4 from data_global"' u 1:2 w lp lw 2 title '$J_5$(uniform)',\
				'< sqlite3 Data/computationdata/nonlinear_problem/Laplacetoy2.db "SELECT DISTINCT DOFS_primal, relativeError5 from data"' u 1:2 w lp lw 2 title '$J_6$(adaptive)',\
				'< sqlite3 Data/computationdata/nonlinear_problem/Laplacetoy2.db "SELECT DISTINCT DOFS_primal, relativeError5 from data_global"' u 1:2 w lp lw 2 title '$J_6$(uniform)',\
				10/sqrt(x) lw 4 dashtype 2  linecolor "red" title '$\mathcal{O}(\text{DOFs}^{-\frac{1}{2}})$',\
				100/x  lw 4 dashtype 2 title '$\mathcal{O}(\text{DOFs}^{-1})$'\
				# '< sqlite3 Data/computationdata/nonlinear_problem/Laplacetoy2.db "SELECT DISTINCT DOFS_primal, Exact_Error from data"' u 1:2 w lp lw 2 title '$J_{\mathfrak{E}}$(adaptive)',\
				# '< sqlite3 Data/computationdata/nonlinear_problem/Laplacetoy2.db "SELECT DISTINCT DOFS_primal, Exact_Error from data_global"' u 1:2 w lp lw 2 title '$J_{\mathfrak{E}}$(adaptive)',\
				#'< sqlite3 Data/computationdata/disturbedunitsquare/p=5/datap=5.db "SELECT DISTINCT  DOFS_primal_pf, Estimated_Error_primal from data"' u 1:2 w lp lw 2 title 'Estimated Error(primal)',\
				#'< sqlite3 Data/computationdata/disturbedunitsquare/p=5/datap=5.db "SELECT DISTINCT  DOFS_primal_pf, Estimated_Error_adjoint from data"' u 1:2 w lp lw 2 title 'Estimated(adjoint)',\
			\end{gnuplot}
			\fi
			\scalebox{0.65}{\input{Error_nonlinear3tex}}
			\caption{
			Example 2: Error vs DOFs.}\label{Example_2ErrorDOFsJ4J5}
		\end{minipage}
		\hspace{0.1 \linewidth}
	\begin{minipage}[t]{0.45\linewidth}
		\centering
		\ifMAKEPICS
		\begin{gnuplot}[terminal=epslatex]
			set output "Figures/Error_nonlinear4tex"
			set grid ytics lc rgb "#bbbbbb" lw 1 lt 0
			set grid xtics lc rgb "#bbbbbb" lw 1 lt 0
			set datafile separator "|"
			set format "
			#set title "Error in $J_n$"
			set xlabel "$l$"
			set key bottom 
			plot '< sqlite3 Data/computationdata/nonlinear_problem/Laplacetoy2.db "SELECT DISTINCT Refinementstep+1, Ieff from data"' u 1:2 w lp lw 2 title '$I_{eff}$ for $J_\mathfrak{E}$',\
			'< sqlite3 Data/computationdata/nonlinear_problem/Laplacetoy2.db "SELECT DISTINCT Refinementstep+1, Ieff_primal from data"' u 1:2 w lp lw 2 title '$I_{effp}$ for $J_\mathfrak{E}$',\
			'< sqlite3 Data/computationdata/nonlinear_problem/Laplacetoy2.db "SELECT DISTINCT Refinementstep+1, Ieff_adjoint from data"' u 1:2 w lp lw 2 title '$I_{effa}$ for $J_\mathfrak{E}$',\
			1 lw 4 dashtype 2
			# '< sqlite3 Data/computationdata/nonlinear_problem/Laplacetoy2.db "SELECT DISTINCT DOFS_primal, Exact_Error from data"' u 1:2 w lp lw 2 title '$J_{\mathfrak{E}}$(adaptive)',\
			# '< sqlite3 Data/computationdata/nonlinear_problem/Laplacetoy2.db "SELECT DISTINCT DOFS_primal, Exact_Error from data_global"' u 1:2 w lp lw 2 title '$J_{\mathfrak{E}}$(adaptive)',\
			#'< sqlite3 Data/computationdata/disturbedunitsquare/p=5/datap=5.db "SELECT DISTINCT  DOFS_primal_pf, Estimated_Error_primal from data"' u 1:2 w lp lw 2 title 'Estimated Error(primal)',\
			#'< sqlite3 Data/computationdata/disturbedunitsquare/p=5/datap=5.db "SELECT DISTINCT  DOFS_primal_pf, Estimated_Error_adjoint from data"' u 1:2 w lp lw 2 title 'Estimated(adjoint)',\
		\end{gnuplot}
		\fi
		\scalebox{0.65}{\input{Error_nonlinear4tex}}
		\caption{
		        Example 2: Effectivity of the Error estimators.}
		        \label{Ieffs}
	\end{minipage}
\end{figure}
\section{Conclusions}
\label{sec_concl}

In this work, we have further developed adaptive 
schemes for multigoal-oriented a posteriori error estimation 
and mesh adaptivity. First, we extended the existing methods to 
nonlinear problems. Second, we combined the estimation of the 
discretization error with an estimation of the nonlinear iteration 
error in order to obtain adaptive stopping rules for Newton's method. In the key Sections 
\ref{Multigoalfunctionals} and \ref{sec_alg}, we formulated 
an abstract framework and its algorithmic realization. 
In Section \ref{sec_num_tests}, these developments were substantiated 
with several numerical tests. 
	Here, we studied the regularized 
$p$-Laplace problem and a nonlinear, coupled PDE system.
	Our findings demonstrate
	the performance of the algorithms
and specifically that the adjoint part of the error estimator, which is 
often neglected in the literature because of its higher computational cost, 
must be taken into account in order to achieve good effectivity indices.
In view of the geometric singularities, nonlinearities in both the PDE and the 
goal functionals, our results show excellent performance of our algorithms.

\section{Acknowledgments}
This work has been supported by the Austrian Science Fund (FWF) under the grant 
P 29181
`Goal-Oriented Error Control for Phase-Field Fracture Coupled to Multiphysics Problems'. 
The first author  thanks
the Doctoral Program on Computational Mathematics
at JKU Linz 
the Upper Austrian Goverment
for the support when starting the preparation of this work. 
The third author was supported 
by the Doctoral Program on Computational Mathematics 
during his visit at the Johannes Kepler University Linz in March 2018.

%
\bibliography{./lit}
\bibliographystyle{abbrv}
\end{document}
